\newcommand{\bfi}{\bfseries\itshape}
\def\f{\varphi}
\def\ftilde{\bar{\varphi}}
\def\a{\alpha}
\def\g{\gamma}
\def\e{\varepsilon}
\def\half{\frac12}
\def\lmat{(}
\def\rmat{)}
\def\linner{(\! (}
\def\rinner{)\!)}
\def\la{\langle}
\def\ra{\rangle}
\def\llangle{\langle\!\langle}
\def\rrangle{\rangle\!\rangle}
\def\lla{\langle\!\langle}
\def\rra{\rangle\!\rangle}
\def\o{\omega}
\def\O{\Omega}
\def\A{\mathcal A}
\def\R{\mathbb R}
\def\M{\mathcal M}
\def\N{\mathcal N}
\def\N1{\mathcal N}
\def\S{\mathcal S}
\def\L{\mathcal L}
\def\TO{\mathcal{D}} 
\def\dTO{\mathfrak{D}} 
\def\SDiff{{\rm SDiff}}
\def\SVect{{\rm SVect}}
\def\frakSO{\mathfrak{so}}
\def\one{\mathbf 1}
\newcommand{\F}[1]{{^{\scriptscriptstyle #1\!\!}F}} 
\newcommand{\Ff}[1]{{^{\scriptscriptstyle #1\!\!}f}}
\newcommand{\dd}{\mathrm{\mathbf d}} 
\newcommand{\ii}{\mathrm{\mathbf i}} 
\newcommand{\Tr}{\mathrm{Tr}}
\newcommand{\Id}{\mathrm{Id}}
\renewcommand{\div}{\mathop{\mathrm{div}}}
\newcommand{\longto}[1]{\xrightarrow[#1]{}} 
\newcommand{\tendsto}[1]{\underset{#1}{\rightsquigarrow}} 
\newcommand{\diag}{\mathop{\mathrm{diag}}}
\newcommand{\mesh}{\mathbb{M}}
\newtheorem{Def}{Definition}
\newtheorem{theorem}{Theorem}
\newtheorem{lemma}{Lemma}
\title{Structure-Preserving Discretization of Incompressible Fluids}
\author[Pavlov, Mullen, Tong, Kanso, Marsden, and Desbrun]{D. Pavlov$^\blacktriangle$, P. Mullen$^\blacktriangle$, Y. Tong$^\blacklozenge$,\\ E. Kanso$^\blacktriangledown$, J.E. Marsden$^\blacktriangle$, M. Desbrun$^\blacktriangle$\\
{\tiny $^\blacktriangle$Caltech \hspace*{.1in} $^\blacklozenge$MSU \hspace*{.1in} $^\blacktriangledown$USC}}
\begin{document}

\begin{abstract}
The geometric nature of Euler fluids has been clearly identified and extensively studied over the years, culminating with Lagrangian and Hamiltonian descriptions of fluid dynamics where the configuration space is defined as the volume-preserving diffeomorphisms, and Kelvin's circulation theorem is viewed as a consequence of Noether's theorem associated with the particle relabeling symmetry of fluid mechanics. However computational approaches to fluid mechanics have been largely derived from a numerical-analytic point of view, and are rarely designed with structure preservation in mind, and often suffer from spurious numerical artifacts such as energy and circulation drift. In contrast, this paper geometrically derives discrete equations of motion for fluid dynamics from first principles in a purely Eulerian form. Our approach approximates the group of volume-preserving diffeomorphisms using a finite dimensional Lie group, and associated discrete Euler equations are derived from a variational principle with non-holonomic constraints. The resulting discrete equations of motion yield a structure-preserving time integrator with good long-term energy behavior and for which an exact discrete Kelvin's circulation theorem holds.
\end{abstract}

\maketitle


\section{Introduction}
The geometric nature of Euler fluids has been extensively studied in the literature in works of Arnold, Ebin-Marsden and others; however the geometric-differential standpoint of these studies sharply contrasts with the numerical approaches traditionally used in Computational Fluid Dynamics (CFD). In particular, methods based on particles, vortex particles, staggered Eulerian grids, spectral elements, as well as hybrid Lagrangian-Eulerian formulations were not designed with structure preservation in mind --- in fact, recent work pinpoints the loss of Lagrangian structures as a major numerical impediment of current CFD techniques~\cite{HoLe2009}. In contrast, structure preserving methods (so-called geometric integrators) have recently become popular in the context of Lagrangian dynamics in solid mechanics. Based on discrete versions of Hamilton's principle and its variants, they have been shown to capture the dynamics of the mechanical system they discretize without traditional numerical artifacts such as loss of energy or momenta.

While the variational principles for incompressible fluid mechanics are best expressed in a Lagrangian formalism, computational efficiency often calls for an \emph{Eulerian treatment} of fluid computations to avoid numerical issues inherent to deforming meshes. In order to circumvent these issues without giving up structure preservation, a new Eulerian formulation of discrete fluid mechanics is thus needed.

Guided by the variational integrators used in the Lagrangian setting, this paper introduces a discrete, structure-preserving theory for incompressible perfect fluids based on Hamilton-d'Alembert's principle. Such a discrete variational approach to fluid dynamics guarantees invariance under the particle-relabeling group action and gives rise to a discrete form of Kelvin's circulation theorem. Due to their variational character, the resulting numerical schemes also exhibit good long-term energy behavior. In addition, the resulting schemes are not difficult to implement in practice (see Figure~\ref{fluid_teaser}), and we will derive particular instances of numerical update rules and provide numerical results. We will favor formalism over smoothness in the exposition of our approach in order to better elucidate the correspondences between continuous and discrete expressions.

\begin{figure}[h]
\centering
\includegraphics[width=\textwidth]{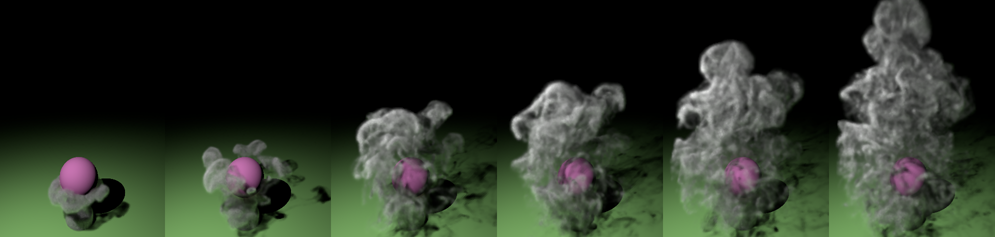}
\caption{\footnotesize Our geometric approach to discretizing the dynamics of incompressible fluids leads
to discrete, structure-preserving, Lie group integrators. Here, six frames of an animation simulating heated smoke rising around a round obstacle in a closed box of incompressible fluid.\vspace*{-2mm}}
\label{fluid_teaser}
\end{figure}

\subsection{Brief Review of the Continuous Case.} %
Let $M\subset \R^n$ be an arbitrary compact manifold, possibly with boundary (where $n$ denotes the dimension of the domain, typically, 2 or 3), and $\SDiff(M)$ be the group of smooth volume-preserving diffeomorphisms on $M$. As was shown in \cite{Arnold1966},  the motion of an ideal incompressible fluid in $M$ may be described by a geodesic curve $g_t$ in $\SDiff(M)$. That is, $\SDiff(M)$ serves as the configuration space---a particle located at a point $x_0 \in M$ at time $t=0$ travels to $g_t(x_0)$ at time $t$. Being geodesics, the equations of motion naturally derive from Hamilton's stationary action principle:
\begin{equation}
\label{fluidLagrangian}
\delta \int_{0}^{1} L(g, \dot{g}) \; dt  = 0\quad\mbox{where} \;\;\;
L(g, \dot{g})   =  \frac{1}{2} \int_M \|\dot{g}\|^2 \; dV
\end{equation}
subject to arbitrary variations $\delta g$ vanishing at the endpoints. Here, the Lagrangian $L(g,\dot g)$ is the kinetic energy of the fluid and $dV$ is the standard volume element on $M$.
As this Lagrangian is invariant under {\em particle relabeling}---that is, the action of $\SDiff(M)$ on itself by composition on the {\it right}, the principle stated in Eq.~\eqref{fluidLagrangian} can be rewritten in
{\em reduced (Eulerian) form} in terms of the Eulerian velocity  $v = \dot{g} \circ g^{-1}$:
\begin{equation}
\delta \int_{0}^{1} l(v) \; dt  = 0 \;\;\;\; \mbox{where} \;\;\;
l(v)   =  \frac{1}{2} \int_{M_0}  \|v\| ^2 \; dV
\end{equation}
subject to constrained variations $\delta v = \dot\xi + [v,\xi]$ (called \emph{Lin constraints}), where $\xi$ is an arbitrary divergence-free vector field---an element of the Lie algebra of the group of volume preserving diffeomorphisms---and $[ \, , ]$ is the Jacobi Lie bracket (or vector field commutator). There is a complex history behind this reduced variational principle which was first shown for general Lie groups by \cite{MaSc1993}; see also~\cite{Arnold1966, EbMa1970, ArKh1992, MaRa1999}). As stated above, the reduced Eulerian principle is more attractive in computations because it involves a fixed Eulerian domain (mesh); however, the constrained variations necessary in this context complicates the design of a variational Eulerian algorithm.

\subsection{Overview and Contributions.}
While time integrators for fluid mechanics are often derived by approximating equations of motion, we instead follow the geometric principles described above and discretize the configuration space of incompressible fluids in order to derive the equations of motion through the principle of stationary action. Our approach uses an Eulerian, finite dimensional representation of volume-preserving diffeomorphisms that encodes the displacement of a fluid from its initial configuration using special orthogonal, signed stochastic matrices. From this particular discretization of the configuration space, which forms a finite dimensional Lie group, one can derive a right-invariant discrete equivalent to the Eulerian velocity through its Lie algebra, i.e., through antisymmetric matrices whose columns sum to zero. After imposing non-holonomic constraints on the velocity field to allow transfer only between neighboring cells during each time update, we apply the Lagrange-d'Alembert principle (a variant of Hamilton's principle applicable to non-holonomic systems) to obtain the discrete equations of motion for our fluid representation. As we will demonstrate, the resulting Eulerian variational Lie-group integrator is structure-preserving, and as such, has numerous numerical properties, from momentum preservation (through a discrete Noether theorem) to good long-term energy behavior.

\begin{figure}[h!]
\centering
\includegraphics[width=2.5in]{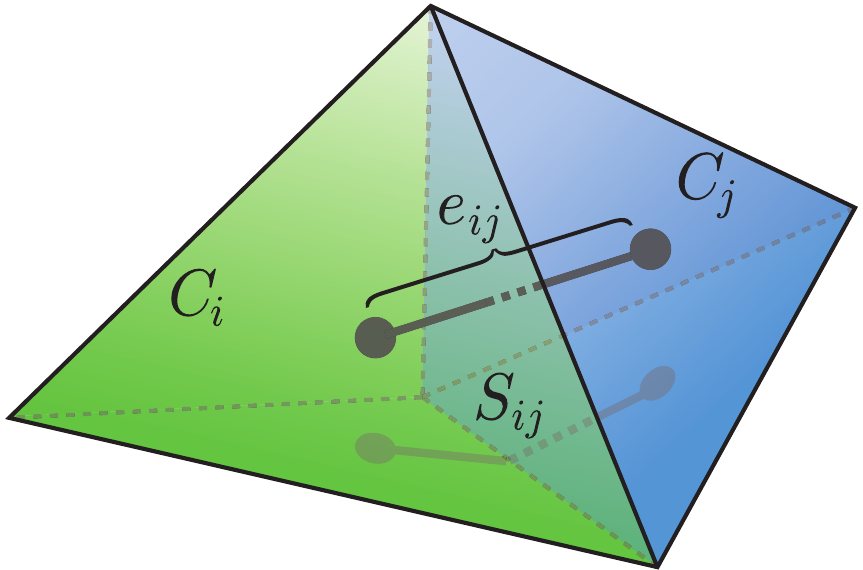}
\caption{\footnotesize Spatial Discretization: two cells $C_i$ and $C_j$, with their common face $S_{ij}= C_i \cap C_j$ of area $|S_{ij}|$ and its dual edge $e_{ij}$ of length $|e_{ij}|$.} \label{adjacentcells}
\end{figure}

\subsection{Notations.}
\label{sec:Notations}
The spatial discretization (mesh), either simplicial (tetrahedra) or regular (cubes), will be denoted $\mesh$, with $N$ being the number of $n$-dimensional cells $\{C_i\}_{i=1, \ldots , N}$ in $\mesh$. The {\bfi size} of a mesh will refer to the maximum diameter $h$ of its cells. The Lebesgue measure will be denoted by $|.|$. Thus, $|C_i|$ is the volume of cell $C_i$, $|C_i \cap C_j|$ is the area of the face common to $C_i$ and $C_j$, etc (see Figure~\ref{adjacentcells}). The {\bfi dual} of $\mesh$ is the circumentric dual cell complex~\cite{Hirani2003}, formed by connecting the circumcenters of each cell $C_i$ based on the connectivity of $\mesh$. We will further assume that the mesh $\mesh$ is Delaunay with well shaped elements~\cite{ToWoAlDe2009} to avoid degeneracies of its orthogonal dual as well as to simplify the exposition. We will also use the term {\bfi regular grid} (or Cartesian grid) to designate a mesh that consists of cells that are $n$-dimensional cubes of equal size. The notation $N(i)$ will denote the set of indices of cells neighboring cell $C_i$, that is, cell $C_i$ shares a face with cell $C_j$ iff $j \in N(i)$.  We will say that a pair of cells $C_i, C_j$ is {\bfi positively oriented around an edge $e$} if they share a face containing $e$ and they are oriented such that they ``turn'' clockwise around the edge when viewed along the oriented edge.  The same term will be used similarly for triplets of cells $C_i, C_j, C_k$ where $i,k \in N(j)$ and all three cells contain edge $e$.

\smallskip

The notation $(.,.)$ and $\langle .,. \rangle$ will respectively refer to the $L^2$ inner product of vectors and the pairing of one-forms and vector fields, while their discrete counterparts will be denoted by $\linner .,. \rinner$ and $\lla .,. \rra$. Table~\ref{tab:variables} summarizes the main variables used in the remainder of this paper, along with their meaning and representation.

\begin{table}[h!]
{\footnotesize
\begin{center}
\begin{tabular}{|c||p{4.3cm}|p{5.7cm}|}
\hline
\textbf{Symbol}  &  \textbf{Meaning}     & \textbf{Representation} \\
\hline \hline
$M$ & Domain of motion & $M\subset\R^n$\\ \hline $n$ & Dimension of
the domain & $n \in \mathbb{N}$ \\ \hline
$\SDiff(M)$ & Configuration space of ideal fluid & Volume-preserving diffeomorphisms on $M$\\ \hline
$\SVect(M)$ & Tangent space of $\SDiff(M)$ at Id & Divergence-free vector fields on $M$\\ \hline \hline
$\mesh$ & Mesh discretizing domain $M$ & Simplicial or regular mesh\\ \hline
$N$ & Number of cells in $\mesh$ & $N\in\mathbb{N}$\\ \hline $C_i$ & Cell $\#i$ of $\mesh$&  Tetrahedron or cube in $3D$\\ \hline
$\O$ & Discrete analog of volume form & Diagonal matrix of cell volumes, $\O_{ii}\!=\!|C_i|$\\ \hline
$\TO(\mesh)$ & Discrete configuration space & $\O$-orthogonal signed stochastic matrices\\ \hline
$\dTO(\mesh)$ & Lie algebra of $\TO(\mesh)$ & $\O$-antisymmetric null-row matrices\\ \hline
$q$ & Discrete configuration & Matrix $\in \TO(\mesh) \subset \mathop{GL}(N) \subset \mathcal{M}^N$ \\ \hline
$A$  & Discrete Eulerian velocity $-\dot{q} q^{-1}$ &  Matrix $\in \dTO(\mesh) \subset \mathfrak{gl}(N) = \mathcal{M}^N$\\ \hline
$\F{k}$ & Discrete $k$-form &  $N$-dimensional tensor of order $(k+1)$\\ \hline
$\N1$ & Space of matrices with sparsity based on cell adjacency & Constrained set of matrices, with $\A_{ij}\ne 0\Rightarrow j \in N(i)$\\  \hline
$\S$ & Space of sparse discrete velocities& Constrained set of velocities, $\S=\dTO(\mesh)\cap\N1$\\ \hline
\end{tabular}\end{center}}
\caption{\footnotesize Physical/Geometric meaning of the basic (continuous and discrete) variables used throughout this document.\vspace*{-6mm}}
\label{tab:variables}
\end{table}

\medskip
\noindent
{\bf Acknowledgments.} We thank Daryl Holm and Yann Br{\'e}nier for helpful early discussions and input, Evan Gawlik for generating the energy plots, and Keenan Crane for generating our 2D tests. This research was partially supported by NSF grants CMMI-0757106, CCF-0811373, and DMS-0453145.

\section{Discrete Volume Preserving Diffeomorphisms}
\label{sec:VPDiff}

We first introduce a finite dimensional approximation to the infinite dimensional
Lie group of volume preserving diffeomorphisms that tracks the amount of
fluid transfered from one cell to another while preserving two key properties: volume
and mass preservation.

\subsection{Finite Dimensional Configuration Space.}
Suppose that the domain $M$ is approximated by a mesh
$\mesh$. Our first step in constructing a discrete representation of ideal fluids is
to approximate $\SDiff(M)$ with a finite dimensional Lie group in such
a way that the elements of the corresponding Lie algebra can be considered as a
discretization of divergence-free vector fields. To achieve this goal, we will
\emph{not discretize the diffeomorphism $g$ itself}, but rather the associated operator
$U_g: L _{2} \rightarrow L _{2}$ defined by $\varphi (x) \mapsto \varphi (g ^{-1} (x) )$. Here $L^2=L^2(M,\R)$ is the space of square integrable real valued functions on $M$. An important property of $U_g$ is given by the following lemma, which follows from the change of variables formula.
\begin{lemma}[Koopman's lemma\footnote{Many dynamical properties of $g$, such as ergodicity, mixing etc., can be studied using spectral properties of $U_g$. The idea of using methods of Hibert spaces to study dynamical systems was fist suggested by Koopman \cite{Koopman1931} and is usually called Koopmanism; it is closely related to the Perron-Frobenius methodology.}] If the diffeomorphism $g$ is volume-preserving, then $U_g$ is a unitary operator on $L^2$.
\end{lemma}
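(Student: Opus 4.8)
The plan is to verify directly the three properties that characterise a unitary operator on the Hilbert space $L^2$: that $U_g$ is a well-defined linear map of $L^2$ into itself, that it preserves the $L^2$ inner product, and that it is invertible. Linearity is immediate from the pointwise formula $\varphi\mapsto\varphi\circ g^{-1}$, so the content of the lemma lies in the other two properties, both of which I expect to follow from a single application of the change of variables formula.

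For the isometry property I would compute, for $\varphi,\psi\in L^2$,
\[
(U_g\varphi,\, U_g\psi) \;=\; \int_M \varphi(g^{-1}(x))\,\psi(g^{-1}(x))\; dV(x),
\]
and substitute $y=g^{-1}(x)$, i.e. $x=g(y)$. Since $g$ is a diffeomorphism this substitution is legitimate, and since $g$ (hence $g^{-1}$) is volume-preserving the Jacobian factor is identically $1$, so $dV(x)=dV(y)$ and the right-hand side becomes $\int_M\varphi(y)\psi(y)\,dV(y)=(\varphi,\psi)$. Taking $\psi=\varphi$ shows $\|U_g\varphi\|=\|\varphi\|$, which simultaneously establishes that $U_g$ really does land in $L^2$ (so it is well defined) and that it is an isometry onto its image.

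To upgrade this isometric embedding to a genuine unitary I would exhibit an inverse. A one-line computation shows that $g\mapsto U_g$ is a homomorphism, $U_g\circ U_h=U_{g\circ h}$, whence $U_g\circ U_{g^{-1}}=U_{\Id}=\Id$ and likewise $U_{g^{-1}}\circ U_g=\Id$; thus $U_g$ is a linear bijection of $L^2$ preserving the inner product, i.e. a unitary operator, with $U_g^{-1}=U_g^{*}=U_{g^{-1}}$. There is no essential obstacle in this argument: the only step requiring any care is the invocation of the change of variables formula, which needs $g$ to be a bona fide diffeomorphism (so that the substitution is valid, by density of smooth functions, on all of $L^2$) — and this is exactly what the hypothesis provides.
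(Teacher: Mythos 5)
Your proof is correct and follows the same route the paper intends: the paper offers no detailed argument, simply noting that the lemma "follows from the change of variables formula," which is exactly the substitution $y=g^{-1}(x)$ with unit Jacobian at the heart of your isometry computation. Your additional step exhibiting $U_{g^{-1}}$ as the inverse via the homomorphism property is the standard way to upgrade the isometry to unitarity and is consistent with what the paper leaves implicit.
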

Another important property of $U_g$ is that it preserves constants, i.e.,
$U_g C=C$ for every constant function $C$, which can be seen as mass preservation for fluids. Next we present an approach to discretize
this operator $U_g$ while respecting its two defining properties.
\medskip

\noindent
{\bf Discrete Functions.} To discretize the operator $U_g$ we first need to
discretize the space on which $U_g$ acts. Since the mesh $\mesh_h$ splits the
domain of motion $M$ into $N$ cells $C_i$ of maximum diameter $h$, a function
$\f \in C^0(M;\R)$ can be approximated by a step function $\ftilde$, constant within
each cell of the mesh, through a map $R_{\mesh_h}: C^0(M;\R) \rightarrow {\rm step \; functions}$, which averages $\f$ per cell:
\begin{equation*}
R_{\mesh_h}:\f \mapsto \ftilde,\quad \ftilde = \sum_i \left[\frac{1}{\Omega_i} \int_{C_i}\f \right] \chi_{C_i},
\end{equation*}
where $\chi_{C_i}$ is the indicator function for the cell $C_i$, and $\Omega_i=|C_i|$ is the volume of cell $C_i$. Since the space of all step functions on $\mesh_h$ is isomorphic to $\R^{N}$, we can consider the step functions as vectors: using the map $P_{\mesh_h}: L _{2} \rightarrow \mathbb{R}^{N}$ defined by
\begin{equation}\label{ProjectorP}
\lmat P_{\mesh_h}\f\rmat_i=\frac{1}{\Omega_i}\int_{C_i}\f,
\end{equation}
we can define a vector $\f_h = P_{\mesh_h}\f$ of size $N$ to represent the step function $\ftilde$. To reconstruct a step function from an arbitrary vector $\f_h\in\R^N$ we
define an operator $S_{\mesh_h}: \mathbb{R}^{N} \rightarrow L _{2}$ by
\begin{equation*}
\left( S_{\mesh_h}\f_{h} \right) (x) = \lmat \f_h\rmat_i,\quad\text{if }x\in C_i.
\end{equation*}
Thus, the operators $R_{\mesh_h}$, $P_{\mesh_h}$ and $S_{\mesh_h}$ are
related through:
\begin{equation*}
R_{\mesh_h}=S_{\mesh_h}P_{\mesh_h}.
\end{equation*}
The vector $\f_h$ will be called a {\bfi discrete function} as it provides
an approximation of a continuous function $\f$: when $h \to 0$,
\begin{equation*}
\|S_{\mesh_h}\f_h-\f\|_{C^0}\to 0.
\end{equation*}

We also introduce a discrete approximation of the continuous $L^2$ inner product of
functions $\la \f,\psi\ra=\int_M\f\psi$ through:
\begin{equation}\label{InnerProduct}
\la\f_h,\psi_h\ra=\sum_i\O_i \; \lmat\f_h\rmat_i \lmat\psi_h\rmat_i.
\end{equation}

\noindent
{\bf Discrete Diffeomorphisms.}
Using the fact that a matrix $q_h\in \M^N$ (here $\M^N$ is the space of real valued
$N\times N$ matrices) acts on a vector $\f_h$, we will say that $q_h$
approximates $U_g$ if $S_{\mesh_h}(q_h\f_h)$ is close to $U_g\f$:
\begin{Def} Consider a family of meshes $\mesh_h$ of size $h$, each consisting of $N_h$
cells $C_i^h$. We will say that a family of matrices $q_h\in\M^N$ approximates
a diffeomorphism $g\in\SDiff(M)$ (and denote this property as: $q_h
\rightsquigarrow g$) if the following is true:
\begin{equation*}
S_{\mesh_h}(q_hP_{\mesh_h}\f)\stackrel{C^0}{\longrightarrow} U_g\,\f
\quad\text{for every }\f\in C(M;\R).
\end{equation*}
\label{def:discreteDiffeo}
\end{Def}

In order to better respect the continuous structures at play, we further enforce
that our discrete configuration space of diffeomorphisms satisfies two key
properties of $U_g$: volume-preservation, reflecting the fact that $U_g$ is
unitary, and total mass preservation, as $U_g$ preserves constants. We will
thus only consider matrices $q$ that:

\begin{itemize}
\item \emph{preserve the discrete $L^2$ inner product of functions}, i.e.,
$$\langle q\f_h,q\psi_h\rangle=\langle \f_h,\psi_h\rangle,$$
where the inner product of discrete functions is defined by
Eq.~\eqref{InnerProduct}. Denoting
$$\Omega=\begin{pmatrix}
|C_1|&0&\ldots&0\\
0&|C_2|&\ldots&0\\
\vdots&\vdots&\ddots
&\vdots\\ 0&0&\ldots&|C_N|\\
\end{pmatrix},$$
note that this discrete notion of volume preservation
directly implies that for our mesh $\mesh_h$ a volume preserving matrix $q$ satisfies
$$ q^T\Omega q=\Omega. $$
The matrix $q$ is thus \emph{$\O$-orthogonal}, restricted to matrices of
determinant $1$.

\item \emph{preserve constant vectors} (i.e., vectors having all coordinates equal) as well:
$$q\one=\one, \quad \text{where: } \one=\left(\begin{smallmatrix}1\\ \vdots\\ \\ 1\end{smallmatrix}\right).$$
The matrix $q$ must thus be \emph{signed stochastic} as well.
 \end{itemize}

Consequently, the finite dimensional space of matrices we will use to discretize
volume-preserving diffeomorphisms has the following definition:
\begin{Def}\label{appr}
Let $\mesh$ be a mesh consisting of cells $C_i$, $i=1,\ldots,N$ and $\O$ be the diagonal matrix consisting of volumes of the cells, i.e., $\O_{ii}=|C_i|$ and $\O_{ij}=0$ when $i\ne j$ (we will abusively use the shorter notation $\Omega_i$ to denote a diagonal element of $\O$ for simplicity in what follows). We will call a matrix $q\in\M^N$ {\bfi volume-preserving} and {\bfi constant-preserving} with respect to the mesh $\mesh$ if, for all $i$ in $\{1,\ldots,N\},$
\begin{equation}\label{stochastic2}
q^T\Omega q=\Omega.
\end{equation}
and
\begin{equation}\label{stochastic1}
\sum_j q_{ij}=1,
\end{equation}

The set of all such $\Omega$-orthogonal, signed stochastic matrices of determinant $1$ will be
denoted $\TO(\mesh)$, and will be used as a discretization of the configuration
space $\SDiff(M)$.
\end{Def}

Our finite dimensional configuration space $\TO(\mesh)$ for fluid dynamics is thus the intersection of two Lie groups: the $\O$-orthogonal group, and the group of invertible stochastic matrices; therefore, it is a \emph{Lie group}. Note that if all cells of $\mesh$ have the same volume, i.e., $\Omega=\Omega_0 \; \Id$, then a matrix $q\in\TO(\mesh)$ is \emph{orthogonal} in the usual sense and the equality~\eqref{stochastic2} implies $\sum_i q_{ij}=1$. For such meshes (which include Cartesian grids), the matrix $q$ is {\it signed doubly-stochastic}.

\medskip

\noindent
{\bf Remark.}\label{DS} An alternate, arguably more intuitive way to discretize
a diffeomorphism $g\in\SDiff(M)$ on a mesh $\mesh$ would be to define a matrix
$q$ as:
\begin{equation*}
q_{ij}(g)\equiv\frac{|g^{-1}(C_j)\cap C_i|}{|C_i|}.
\end{equation*}
This discretization also satisfies by definition a discrete preservation of mass and
a (different) notion of volume preservation. While it has the added benefit of
enforcing that $q$ has no negative terms (therefore respecting the positivity of
$U_g$), the class of matrices it generates is, unfortunately, only a semi-group,
which would be an impediment for establishing a variational treatment of fluids as an inverse map will be needed in the Eulerian formulation. So instead, we take the \emph{orthogonal} part of this matrix as our configuration
(which can be obtained in practice through the \emph{polar decomposition}). Notice that
polar factorization has often been proposed in the context of fluids (see, e.g.,~\cite{Brenier1991}),
albeit for more general non-linear Hodge-like decomposition.

\subsection{Discrete Velocity Field.}
Now that we have established a finite dimensional configuration space
$\TO(\mesh)$, we describe its associated Lie algebra, and show that
elements of this Lie algebra provide a discretization of divergence-free vector
fields $\SVect(M)$. We will assume continuous time for simplicity, but a fully
discrete treatment of space and time will be introduced in Section~\ref{sec:discretetime}.

Consider a smooth path in the space of volume-preserving diffeomorphisms
$g_t\in\SDiff(M)$ with $g_0=\Id$, and let $q_h(t)$ be an approximation of $g_t$,
i.e., for any piecewise constant function $\f_h^0$ approximating a smooth
function $\f^0\in C^1(M,\R),$ a discrete version of $\f^0\circ g^{-1}_t=\f(t)$ is given by
\[
\f_h(t)=q_h(t)\,\f_h^0.
\]
Assuming $q_h(t)$ is smooth in time, we define its {\bfi Eulerian velocity} $A_h(t)$ to
be
$$A_h(t)=-\dot q_h(t)\,q_h^{-1}(t),$$
thus yielding
$$\dot \f_h(t)=-A_h(t)\,\f_h(t).$$

Since $\frac{d}{dt}(\f^0\circ g^{-1}_t)=-\la\dd \f(t),v_t\ra = - \mathbf{L}_{v_t} \f$, where
$v_t= \dot{g} _{t} \circ g^{-1}_t$ and $\mathbf{L}_{v_t}$ is the Lie derivative, the matrix $A_h(t)$
represents an approximation of the Eulerian velocity field $v_t$, which
motivates the following definition:

\begin{Def}\label{approxlv}
Consider a one-parameter family of volume-preserving diffeomorphisms
$g_t\in\SDiff(M)$ and the associated time-dependent vector field
$v_t = \dot{g} _{t} \circ g^{-1}_t \in \SVect(M)$. Consider a family of meshes $\mesh_h$ of size $h$
consisting of cells $C_i^h$ and an operator $P_{\mesh_h}:C(M;\R)\to
\R^{N_h}$ defined by Eq.~\eqref{ProjectorP}.

We will say that a family of matrices $A_h(t)\in\M^{N_h}$ approximates a vector
field $v_t$ (denoted by $A_h(t)\rightsquigarrow v_t$) if the following statement is
true:
\begin{equation*}
S_{\mesh_h}(A_h(t)P_{\mesh_h}\f)\stackrel{C^0}{\longrightarrow} \mathbf{L}_{v_t}\f\quad\text{for every }\f\in C^\infty(M;\R).
\end{equation*}
\label{def:AApproximation}
\end{Def}
\vspace{-0.15in}

\noindent
{\bf Remark.} The choice of the minus sign in the definition of $A_h(t)$ stems from the fact that $q_h(t)$ represents $U_g$ (thus, $g^{-1}$ in essence). Since $\mathbf{L}_v = -\dot{U_g} U_g^{-1}$, we picked the sign to make $A_h(t)$ represents $\mathbf{L}_v$, consistent with the continuous case.
\medskip

If a curve of matrices $q(t)$ belongs to the configuration space $\TO(\mesh)$ (i.e., if $q(t)$
is $\O$-orthogonal signed stochastic), then its associated $A$ belongs to its Lie
algebra that we denote as $\dTO(\mesh)$. Matrices from this
Lie algebra inherit the properties that \emph{their rows must sum to zero}:
$$\sum_j A_{ij}=0\quad\text{(preservation of mass)},$$
and \emph{they are $\O$-antisymmetric}:
$$A^T\O+\O A=0\quad\text{(preservation of volume)}.$$
These two properties can be intuitively understood as discrete statements that
$A$ represents an advection, and the vector field representing this advection is
divergence-free. Lie algebra elements for arbitrary simplicial meshes will be
called \emph{null-row $\O$-antisymmetric   matrices}. Note that if the mesh
is regular ($\Omega=\Omega_0 \; \Id$), $q$ belongs to the orthogonal group and
the matrix $A$ has to be \emph{antisymmetric} with both its rows and columns
summing to zero (``doubly null'').

The link between convergence of $A_h(t)$ to $\mathbf{L}_{v_t}$ and
convergence of $q_h(t)$ to $U_{g_t}$ is described by the following lemma.
\begin{lemma}
Consider the setup of Definition~\textup{\ref{def:AApproximation}} and suppose a family of matrices $A_h(t)\in\dTO(\mesh_h)$ approximates the Lie derivative $\mathbf{L}_{v_t}$ (in the sense of Definition~\ref{approxlv}) uniformly in $t$ when $t\in [0,T]$ for some $T>0$.

Then there is a family of matrices $q_h(t)\in\TO_{\mesh_h}$ such that
$A_h(t)=-\dot q_h(t)\,{q_h(t)}^{-1}$ and $q_h(t)$ approximates $g_t$ (in the sense
of Definition~\ref{def:discreteDiffeo}).
\end{lemma}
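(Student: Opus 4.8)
The plan is to define $q_h(t)$ as the flow of the linear matrix ODE $\dot q_h=-A_h(t)q_h$, to check that the two defining relations of $\dTO(\mesh_h)$ keep this flow inside the group $\TO(\mesh_h)$, and then to prove $q_h(t)\rightsquigarrow g_t$ by a Duhamel estimate whose stability is automatic from $\O$-orthogonality. Concretely: for each fixed $h$, let $q_h(t)$ be the unique solution on $[0,T]$ of $\dot q_h(t)=-A_h(t)q_h(t)$ with $q_h(0)=\Id$; it exists on the whole interval since $A_h(\cdot)$ is continuous and the equation is linear. The structure of $A_h(t)\in\dTO(\mesh_h)$ is exactly what forces $q_h(t)$ to remain in the group. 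From $\O$-antisymmetry $A_h^T\O+\O A_h=0$, the curve $P(t):=q_h(t)^T\O\,q_h(t)$ satisfies $\dot P=-q_h^T(A_h^T\O+\O A_h)q_h=0$, so $P\equiv\O$ and $q_h(t)$ is $\O$-orthogonal; consequently $(\det q_h)^2=1$, and continuity from $\det q_h(0)=1$ gives $\det q_h(t)=1$, so $q_h(t)$ is invertible and indeed $A_h(t)=-\dot q_h(t)q_h(t)^{-1}$. From the null-row relation $A_h\one=0$, the curve $w(t):=q_h(t)\one$ solves $\dot w=-A_h w$ with $w(0)=\one$, and since $w\equiv\one$ is also a solution, uniqueness gives $q_h(t)\one=\one$. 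Hence $q_h(t)\in\TO(\mesh_h)$ for all $t\in[0,T]$.

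For the approximation claim, fix $\f\in C^\infty(M;\R)$, set $\f_h(t):=q_h(t)P_{\mesh_h}\f$, and let $\f(t):=U_{g_t}\f=\f\circ g_t^{-1}$, which solves the transport equation $\dot\f(t)=-\mathbf{L}_{v_t}\f(t)$ with $\f(0)=\f$. Comparing $\f_h(t)$ with the cell average $y_h(t):=P_{\mesh_h}\f(t)$ of the exact advected function, the error $e_h:=\f_h-y_h$ obeys $\dot e_h=-A_h(t)e_h-r_h(t)$, $e_h(0)=0$, with consistency residual $r_h(t):=A_h(t)P_{\mesh_h}\f(t)-P_{\mesh_h}\mathbf{L}_{v_t}\f(t)$. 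Variation of parameters gives $e_h(t)=-\int_0^t q_h(t)q_h(s)^{-1}r_h(s)\,ds$, and since the transition map $q_h(t)q_h(s)^{-1}$ is again $\O$-orthogonal, hence an isometry for the discrete norm $\|u\|_\O:=\la u,u\ra^{1/2}=\|S_{\mesh_h}u\|_{L^2(M)}$, one obtains the stability bound $\|e_h(t)\|_\O\le\int_0^t\|r_h(s)\|_\O\,ds\le T\sup_{s\in[0,T]}\|r_h(s)\|_\O$. Using $S_{\mesh_h}P_{\mesh_h}=R_{\mesh_h}$, split $S_{\mesh_h}r_h(s)=\big(S_{\mesh_h}(A_h(s)P_{\mesh_h}\f(s))-\mathbf{L}_{v_s}\f(s)\big)-\big(R_{\mesh_h}(\mathbf{L}_{v_s}\f(s))-\mathbf{L}_{v_s}\f(s)\big)$: the second bracket tends to $0$ in $C^0$ uniformly in $s$ because $\{\mathbf{L}_{v_s}\f(s)\}_{s\in[0,T]}$ is equicontinuous and cell averaging converges at a rate set by the common modulus of continuity and $h$; the first bracket is the hypothesis $A_h(s)\rightsquigarrow v_s$, but applied to the moving test function $\f(s)$. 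Granting that, $\sup_s\|r_h(s)\|_\O\le|M|^{1/2}\sup_s\|S_{\mesh_h}r_h(s)\|_{C^0}\to0$, so $S_{\mesh_h}\f_h(t)\to U_{g_t}\f$ uniformly in $t$; a density argument then passes from $\f\in C^\infty$ to $\f\in C(M;\R)$, using that $U_{g_t}$ is a $C^0$-isometry together with a uniform operator bound on $q_h$.

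The one genuinely delicate point is promoting the hypothesis $A_h(s)\rightsquigarrow v_s$ — stated for each fixed smooth test function — to uniformity over the compact trajectory $\{U_{g_s}\f:s\in[0,T]\}$ of test functions that actually appears in $r_h$. I would handle this by covering that compact subset of $C^\infty(M)$, in a suitable $C^k$ norm, by finitely many balls $B(\psi_\ell,\e)$ with smooth centers $\psi_\ell$, applying the hypothesis to the finite family $\{\psi_\ell\}$ (for which it is automatically uniform), and controlling the $\e$-perturbations via a bound $\|S_{\mesh_h}(A_h(s)P_{\mesh_h}\eta)\|_{C^0}\le C\|\eta\|_{C^k}$ that is uniform in $h$ and $s$. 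Such a bound is available either abstractly from the uniform boundedness principle — $C^\infty(M)$ is a barrelled Fréchet space and the family $\{S_{\mesh_h}A_h(s)P_{\mesh_h}\}$ is pointwise convergent, hence pointwise bounded, hence equicontinuous — or concretely from the sparse, null-row structure of $\dTO(\mesh_h)$: since $\sum_j(A_h)_{ij}=0$, one has $(A_h P_{\mesh_h}\eta)_i=\sum_j(A_h)_{ij}\big((P_{\mesh_h}\eta)_j-(P_{\mesh_h}\eta)_i\big)$, which is $O(h)\|\eta\|_{C^1}$ times the row sum $\sum_j|(A_h)_{ij}|=O(1/h)$ for velocity fields of bounded magnitude. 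Everything else in the argument is routine Grönwall and approximation bookkeeping.
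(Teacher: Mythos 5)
Your construction of $q_h(t)$ as the solution of $\dot q_h=-A_h(t)q_h$, $q_h(0)=\Id$, together with the verification that it stays in $\TO(\mesh_h)$ (constancy of $q_h^T\O q_h$, determinant one by continuity, $q_h\one=\one$ by uniqueness of the linear ODE) is correct, and is in fact more complete than the paper, which never checks group membership. For the convergence part you take a genuinely different route: the paper integrates the error directly in the sup norm, asserting $\|S_{\mesh_h}(A_h(t)\f_h(t))-\mathbf{L}_{v_t}\f\|<\e_2$ \emph{along the discrete trajectory} and concluding $\|S_{\mesh_h}\f_h(t)-\f(t,\cdot)\|\lesssim \e_1+\e_2 t$ without ever invoking stability of the discrete evolution (at the price of applying the approximation hypothesis to the evolving discrete data rather than to $P_{\mesh_h}$ of a fixed smooth function --- exactly the uniformity issue you isolate and the paper glosses over). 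You instead run a consistency-plus-stability (Lax-type) argument in which stability comes from $\O$-orthogonality. That is cleaner in one respect (your residual involves $A_h(s)P_{\mesh_h}\f(s)$ with $\f(s)=U_{g_s}\f$ genuinely smooth, and your compactness/equicontinuity patch for uniformity over this family is sound), but it creates the one genuine gap in the proposal: $\O$-orthogonality makes the transition maps isometries only for the discrete $L^2$ norm $\|\cdot\|_\O$, whereas Definition~\ref{def:discreteDiffeo} requires $C^0$ convergence of $S_{\mesh_h}(q_hP_{\mesh_h}\f)$ to $U_{g_t}\f$.

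Concretely, your Duhamel bound controls only $\|e_h(t)\|_\O$, and the concluding step ``so $S_{\mesh_h}\f_h(t)\to U_{g_t}\f$ uniformly in $t$'' does not follow in the $C^0$ sense: converting an $L^2$ bound on a piecewise-constant function into a sup bound costs an inverse factor of order $(\min_i\O_i)^{-1/2}\sim h^{-n/2}$, and the hypothesis $A_h\rightsquigarrow v_t$ supplies no convergence rate with which to absorb it. The same norm mismatch undermines your closing density argument: passing from $\f\in C^\infty$ to $\f\in C(M;\R)$ needs a uniform bound on $S_{\mesh_h}q_h(t)P_{\mesh_h}$ as operators on $C^0$, i.e.\ a uniform bound on $\max_i\sum_j|(q_h)_{ij}|$; $\O$-orthogonality gives only the weighted $\ell^2$ bound, and signed stochastic $\O$-orthogonal matrices have no a priori sup-norm control uniformly in $h$. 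To close the argument as stated you would need either a sup-norm stability estimate for the discrete flow, or a quantitative consistency rate for smooth data (something like $O(h^\alpha)$, which your null-row trick $(A_hP_{\mesh_h}\eta)_i=\sum_j(A_h)_{ij}((P_{\mesh_h}\eta)_j-(P_{\mesh_h}\eta)_i)$ might plausibly deliver) strong enough to beat the $h^{-n/2}$ inverse estimate; neither is in the proposal, so as written it proves an $L^2$ version of the lemma rather than the $C^0$ statement the paper's definitions require.
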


\begin{proof}
Consider a family of smooth functions $\f(t,x)$ satisfying the advection
equation
\begin{equation*}
\dot \f(t,x)=-\mathbf{L}_{v_t}\f(t,x).
\end{equation*}
Suppose that $\ftilde(0,x)=S_{\mesh_h}P_{\mesh_h}\f(0,x)$ is an approximation to
$\f(0,x)$ with
\begin{equation*}
\sup_{x\in M}|\ftilde(0,x)-\f(0,x)|<\epsilon_1
\end{equation*}
and that $\f_h(t)=P_{\mesh_h}\ftilde(t,x)$ satisfies the discrete advection
equation
\begin{equation*}
\dot{\f}_h(t)=-A_h(t)\,\f_h(t).
\end{equation*}

Since $A_h(t)$ approximates $\mathbf{L}_{v_t}$, given $\e_2>0$, we can choose $h$ such that
\begin{equation*}
\|S_{\mesh_h}(A_h(t)\f_h(t))-\mathbf{L}_{v_t}\f\|<\epsilon_2,\quad\text{for all }t\in [0,T].
\end{equation*}
Therefore,
\begin{equation*}
\|S_{\mesh_h}(\dot{\f}_h(t))(x)-\dot\f(t,x)\|<\epsilon_2,\quad\text{for all }t\in [0,T]
\end{equation*}
and
\begin{equation*}
\|S_{\mesh_h}(\f_h(t))(x)-\f(t,x)\|<\epsilon_2+\epsilon_2t.
\end{equation*}
Thus, we have shown that $\f_h(t,x)$ approximates $\f(t,x)$. However, $\f(t,x)$ satisfies
\begin{equation*}
\f(t,x)=U_{g_t}\f(0,x),
\end{equation*}
and $\f_h$ satisfies
\begin{equation*}
\f_h(t)=q(t)\,\f_h(0),
\end{equation*}
where $q(t)$ is the matrix satisfying the equation
\begin{equation*}
\dot q(t)=-A_h(t) q(t).
\end{equation*}
Therefore, we see that $q(t)\f(0)$ approximates $U_{g_t}\f(0,x)$. Thus,
$A_h(t) \rightsquigarrow v_t$ implies that $q(t) \rightsquigarrow g_t$.
\end{proof}

\subsection{Discrete Commutator.} A space-discrete flow that approximates a continuous flow $g(t)\in\SDiff(M)$ is defined to be a smooth path $q(t)\in \TO(\mesh)$ in the
space of $\O$-orthogonal signed stochastic matrices, such that
$q(t)\rightsquigarrow g(t)\in\SDiff(M)$ (see Definition~\ref{appr}) and
$A(t)=-\dot q(t)\,q^{-1}(t)\rightsquigarrow v_t=\dot g_t(g^{-1}_t)$ (see
Definition~\ref{approxlv}). It is straightforward to show that the
Lie algebra structure of the space of divergence-free vector fields is
preserved by our discretization. Indeed, if two matrices $A$ and $B$
approximate vector fields $u$ and $v$ then their commutator $[A,B]$
approximates the commutator of the Lie derivative operators:
$$[A,B]\to \mathbf{L}_u\mathbf{L}_v-\mathbf{L}_v\mathbf{L}_u.$$
Since $\mathbf{L}_u\mathbf{L}_v-\mathbf{L}_v\mathbf{L}_u=\mathbf{L}_{[u,v]}$, we obtain $[A,B]\rightsquigarrow [u,v],$
where $[.,.]$ denotes both the commutator of vector fields and the commutator of matrices.
This property will be very useful to deal with Lin constraints later on.

\subsection{Non-holonomic Constraints (NHC)}\label{sec:NHC}
For a smooth path $q(t),$ the matrix $A(t)$ describes the infinitesimal
exchanges of fluid particles between any pair of cells $C_i$ and $C_j$. We will thus
 assume that $A_{ij}$ is non-zero \emph{only} if cells $C_i$ and $C_j$
share a common boundary, i.e., are immediate neighbors. This sparsity will be numerically advantageous later on to reduce the computational complexity of the resulting integration schemes. We thus choose to \emph{restrict discrete paths $\{q(t)\}$ on $\TO(\mesh)$ to those for which $A(t)$ satisfies this constraint}\footnote{Although we will adopt this sparsest form of the velocity in this paper, there may be advantages in considering larger non-zero neighborhoods in future work.}. In other words, we \emph{only} consider null-row $\O$-antisymmetric matrices satisfying the constraints as valid discrete vector fields. The non-zero elements $A_{ij}$ of these matrices correspond to boundaries between adjacent cells $C_i$ and $C_j$, and can be interpreted as \emph{directional transfer densities (per second) from $C_i$ to $C_j$}---they could abusively be called ``fluxes'' on regular grids; but we will make the proper link with the integrals of the velocity field over mesh faces in the next section.

More formally, we define the {\bfi constrained set} $S_q\subset T_q\TO(\mesh)$ as
the set of matrices corresponding to exchanges between neighboring cells only,
i.e., $\dot q\in S_q$ if and only if $\lmat \dot q q^{-1}\rmat_{ij}\ne 0$ implies that
the cells $C_i$ and $C_j$ are neighbors. In this case the matrix $A$ is defined by a
set of non-zero values $A_{ij}$ defined on faces between adjacent cells $C_i$ and
$C_j$. As mentioned previously, to indicate their adjacency, we will write that $j\!\in\!N(i)$
and $i\!\in\!N(j)$, where $N(k)$ refers to the set of indices of adjacent cells to cell $C_k$ in the
mesh $\mesh$. We will say that a matrix $A$ belongs to the class $\N1$ if
$A_{ij}\ne 0$ implies $j\!\in\! N(i)$. Finally, we will denote by $\S\equiv
S_{\text{Id}}=\dTO(\mesh)\cap\N1$, the constrained set at the identity. Consequently, our
treatment of fluid dynamics will only consider matrices $A$ in $\S \subset
\dTO(\mesh)$, i.e.,  matrices in $\dTO(\mesh)$ satisfying the sparsity constraints.

Note that if two matrices $A$ and $B$ both satisfy the constraints, their
commutator need \emph{not}: while the element of the
commutator corresponding to any pair of cells which are more than two cells
away is zero, the element $[A,B]_{ij}$ may be non-zero when cells $C_i$ and
$C_j$ are ``two cells away'' from each other since
\[[A,B]_{ij}=\sum_k(A_{ik}B_{kj}-B_{ik}A_{kj}).\]
Notice that the commutator is zero for neighboring cells since
$A_{kk}=B_{kk}=0$ due to their $\O$-antisymmetry. Writing
$[\S,\S]=\{[A,B]\mid A,\,B\in\S\}$, one sees that $\S\cap [\S,\S]=\{\mathbf 0\}$, where
$\mathbf 0$ is the zero matrix. Therefore, the constraints we just defined are
\emph{non-holonomic}.

\medskip

\noindent
{\bf Remark.}
When a discrete vector field $A$ is in $\S$, the non-zero values $\O_i
A_{ij}$ of the antisymmetric matrix $\O A$ can be understood as dual
$1$-chains, i.e., $1$-dimensional chains on the dual of
$\mesh$~\cite{Munkres1984}. This connection with 1-chains will become crucial
later when dealing with advection of curves to derive a discrete Kelvin's theorem
in Section~\ref{sec:Kelvin}.

\subsection{Relation Between Elements of $A$ and Fluxes.}\label{sec:AasFlux}
Suppose we have a family of discrete flows $q_h(t)$ which approximates a flow $g_t\in\SDiff(M)$ such that $A_h(t)=- \dot q_h(t)\,{q_h(t)}^{-1}$ approximates $\mathbf{L}_{v_t}$ and satisfies the NHC. Let's see how individual elements $\lmat A_h\rmat_{ij}(t)$ of $A_h(t)$ are related to spatial values of $v_t$. Recall that
\[\dot\f_h(t)=-A_h(t)\,\f_h(t)\]
is a discrete version of the advection equation
\[\dot\f=-\mathbf{L}_{v_t}\f\]
and $A_h(t)\f_h(t)\to \mathbf{L}_{v_t}\f$ in the $C^0$ norm. But it also means that
$\lmat\Omega A_h(t)\f_h(t)\rmat_i$ is an approximation to the integral
$\int_{C_i}\mathbf{L}_{v_t}\f_t$, i.e.,
\begin{equation}
\sum_{j\in N(i)} \O_i \lmat A_h\rmat_{ij}(t)\f_j(t) \approx \int_{C_i}\mathbf{L}_{v_t}\f_t
\overset{\nabla \cdot v_t = 0}{=} \int_{\partial C_i}\f_t \;(v_t,\vec{n})
\label{eq:intFlux}
\end{equation}
where $\vec{n}$ is the normal vector to the boundary of $C_i$ and $(.,.)$ denotes the inner product of vectors. However,
\begin{equation*}
\int_{\partial C_i}\f_t \;(v_t,\vec{n}) \approx \sum_{j\in N(i)}\frac{1}{2}(\f_i+\f_j) \int_{S_{ij}}(v_t,\vec{n}_{ij}) \overset{\nabla \cdot v_t = 0}{=} \sum_{j\in N(i)}\frac{1}{2}\f_j \int_{S_{ij}}(v_t,\vec{n}_{ij}).
\end{equation*}
where $S_{ij}$ is the face shared by cells $C_i$ and $C_j$, and $\vec{n}_{ij}$ the normal vector to $S_{ij}$ oriented from $C_i$ to $C_j$. By comparing this result to equation \eqref{eq:intFlux}, it is clear that an element $\O_i\lmat A_h\rmat_{ij}(t)$ can be considered (up to a constant) as an approximation to the \emph{flux} of a vector field $v(t)$ through $S_{ij}$:
\[\O_iA_{ij}(t)\approx \frac{1}{2}\int_{S_{ij}} (v_t,\vec{n}_{ij}).\]

We know that $\int_{S_{ij}} (v_t,\vec{n})\approx (v_t(x_{ij}),\vec{n}_{ij})S_{ij} +O(h^2)$,
where $x_{ij}$ is the barycenter of the boundary $S_{ij}$ and $|S_{ij}|$ is the area
of $S_{ij}$. Therefore, we obtain that, up to a constant dependent on local mesh measures,
$\lmat A_h\rmat_{ij}$ approximates the flux through the boundary between $C_i$ and $C_j$, i.e.,
\begin{equation*}
\lmat A_h\rmat_{ij}(t)\approx (v_t(x_{ij}),\vec{n}_{ij})\frac{|S_{ij}|}{2 \O_i}.
\end{equation*}

In the case of a Cartesian grid of size $h$ this formula simplifies to:
\begin{equation*}
\lmat A_h\rmat_{ij}(t)\approx \frac{(v_t(x_{ij}),\vec{n}_{ij})}{2h}.
\end{equation*}

\subsection{Towards Lagrangian Dynamics with Non-holonomic Constraints.}
\label{sec:hint}%
One of the goals of this paper is to approximate geodesic flows on $\SDiff(M)$ by Lagrangian flows on $\TO(\mesh)$. To achieve this goal, we first need to define a Lagrangian $\L_h(q,\dot q)$ such that
\begin{equation}\label{LapproxL}
\L_h(q,\dot q)\to\int_M\half \|v\|^2 dV\quad\text{when }-\dot q q^{-1}\rightsquigarrow v
\end{equation}
and
\begin{equation}\label{dLapproxdL}
\delta \L_h(q,\dot q)\to\delta\int_M \half \|v\|^2 dV\quad\text{when }-\dot q
q^{-1}\rightsquigarrow v \text{ and } \delta(-\dot q q^{-1})\rightsquigarrow \delta v.
\end{equation}
Such a Lagrangian, depending only on $A=-\dot q q^{-1}$ to mimic the continuous case, can then be used to formulate fluid dynamics through a discrete \emph{Lagrange-d'Alembert principle} (to account for the non-holonomic constraint we impose on the sparsity of our Eulerian velocity approximation):
\begin{equation*}
\delta \int_0^1 \L_h(q,\dot q)dt=0 \text{ with } \left\{
\begin{aligned}
\delta & q\in S_q\\
\delta & q(0)=\delta q(1)=0.
\end{aligned}
\right.
\end{equation*}
Note that the constraint on the variations of $q$ will induce a constraint on the variations of $A$, giving rise to a discrete version of the well-known Lin constraints of the form $\delta A=\dot B+[A,B],$ with  $B=-\delta q \; q^{-1}$ (see Section~\ref{DiscreteLinConstraints}).

However, we will show in later sections that coming up with a proper Lagrangian will require great care. As is typical with nonholonomic systems, the dynamics on $\TO(\M)$ will depend strongly on the values of $\partial \L_h / \partial A$ (i.e., the matrix with $\partial \L_h / \partial A_{ij}$ as its $(i,j)$ element) \emph{outside} of the constraint set $\S$ because of the commutator present in the Lin constraints. In particular, a conventional discretization of the kinetic energy via the sum of all the squared fluxes on the grid would lead to a matrix $ \partial \L_h / \partial A$ with only values on pairs of adjacent cells, resulting in no dynamics. Instead, the Lagrangian \emph{must} depend on values $A_{ij}$ where $i\notin N(j)$.

To satisfy properties~(\ref{LapproxL}) and~(\ref{dLapproxdL}), we will look for a Lagrangian $\L_h$ of the form
\[
\L_h(A)=\frac12 \linner A,A\rinner,
\]
where the discrete $L^2$-inner-product $\linner \cdot,\cdot\rinner$ will be defined to satisfy the following properties (where $(\cdot,\!\cdot)$ denotes the continuous inner product of vector fields): for all $A, B \in \S $,
\[
\linner A,B \rinner=\linner B,A \rinner
\to \int_M (u,v) dV,\quad\text{when } A\rightsquigarrow u,\,B\rightsquigarrow v\]
and for all $A, B,C \;\in  \S$
\begin{equation}
\linner A,[B,C]\rinner \to\int_M \!(u, [v,w]) dV \!=
\! \int_M \!-\dd u^\flat(v,w) dV, \,
\quad \text{when } \left\{ \!\!\begin{array}{l}
  A\rightsquigarrow u \\
  B\rightsquigarrow v\\
  C\rightsquigarrow w
  \end{array} \right.
\label{eq:pairingWithCommutator}
\end{equation}
where $\flat$ is the continuous flat operator (see for instance~\cite{AbMaRa1988}). These properties will guarantee that conditions~(\ref{LapproxL}) and~(\ref{dLapproxdL}) are satisfied, and will lead to the proper dynamics. In the next section we will present a discretization of differential forms and a few operators acting on them to help us construct the discrete $L^2$-inner product (or equivalently, the discrete flat operator $\flat$).

\section{Structure-Preserving Spatial Field Discretization}
We now introduce a discrete calculus consistent with our discretization of vector
fields. Unlike previous discrete exterior calculus approaches, mostly based on
chains and cochains (see \cite{DeKaTo2005,BoHy2005,ArFaWi2006} and references therein), we clearly
distinguish between discrete vector fields and discrete forms acting on them.
Moreover, our notion of forms will need to act not only on vector fields satisfying
the NHC (being thus very reminiscent of the chain/cochain approach), but also
on vector fields resulting from a commutator as imposed by the Lin constraints. We
also introduce a discrete contraction operator $\ii_v$ and a discrete Lie derivative $\mathbf{L}_v$
to complete our set of spatial operators---we will later show that the algebraic
definition of our Lie derivative matches its dynamic counterpart as expected.
We will not make any distinction in symbols between the discrete and continuous exterior calculus
operators ($\ii_v$, $\mathbf{L}_v$, $\dd$, $\flat$, etc) as the context will make their meaning clear.

\subsection{Discrete Zero-forms.}
\label{sec:0forms}
In our context, a discrete $0$-form is a function $\F{0}$ that is piecewise
constant per cell as previously defined in Section~\ref{sec:VPDiff}. Note that its
representation is a vector of $N$ cell values,
$$\F{0} = ( \F{0}_1, \F{0}_2, \hdots, \F{0}_N)^T,$$
where $\F{0}_i$ represents the value of the function $\F{0}$ in cell $C_i$. Also, the
volume integral of such a discrete $0$-form is obtained by weighting the value of each
cell by the Lebesgue measure of this cell, and summing all contributions:
\begin{equation*}
\quad \int_M \F{0} \; dV = \; \sum_{i=1}^{N} \Omega_i \F{0}_i.
\end{equation*}

\medskip

\noindent
{\bf Remark.}
Our definition of $0$-forms is no different from dual $0$-cochains in dimension $n$ as
used extensively in, e.g.,~\cite{Munkres1984,DeKaTo2005}. They naturally pair with
dual $0$-chains (i.e., linear combinations of cell circumcenters).

\subsection{Discrete One-forms.}
As the space $\TO(\mesh)$ of matrices is used to discretize vector fields,
a natural way to discretize one-forms is to also use matrices to respect the
duality between these two entities. Moreover, it is in line with the previous
definition for $0$-forms that were encoded as a 1-tensor: $1$-forms will now be
encoded by a $2$-tensor. Notice that this is also reminiscent of the approximation
$TM\approx M\times M$ used in discrete mechanics~\cite{MaWe2001}.
\medskip

\noindent
\textbf{Discrete Contraction.}
We define the contraction operator by a discrete vector field $A$, acting on a
discrete one-form $\F{1}$ to return a discrete zero-form, as:
\begin{equation}
\ii_A \F{1} \equiv \diag (A \F{1}^T)\stackrel{\mathrm{def}}{=}((A \F{1}^T)_{11},\ldots,(A \F{1}^T)_{NN})^T.
\label{eq:contractionSimple}
\end{equation}
Notice the metric-independence of this definition, and that if the discrete vector
field contains only non-zero terms for neighboring cells, any term $\lmat
\F{1}\rmat _{ij}$ where cell $C_i$ and cell $C_j$ are not neighbors does not
contribute to the contraction. In this case, the value of the resulting $0$-form for
cell $C_i$ is thus: $\lmat\ii_A \F{1}\rmat_i = \sum_{j\in N(i)} A_{ij} \F{1}_{ij}$,
which is a local sum of the natural pairings of $\F{1}$ and $A$ on each face of
cell $C_i$.

\medskip

\noindent
\textbf{Discrete Total Pairing.}
With this contraction defined, we derive a total pairing between a discrete $1$-form and a
discrete vector field as:
\begin{equation*}
\lla \F{1},A \rra \equiv \Tr (\O A \F{1}^T).
\end{equation*}
This definition satisfies the following connection with the contraction defined
in Eq.~\eqref{eq:contractionSimple}: indeed, for all $A\in\dTO(\mesh),$
\begin{equation*}
\quad \int_M \ii_A \F{1} \; dV = \; \lla \F{1},A \rra.
\end{equation*}
Note that the volume form $\O$ is needed to integrate the
piecewise-constant $0$-form $\ii_A \F{1}$ over the entire domain as explained in
Section~\ref{sec:0forms}. Finally, since the matrix $\O A$ is antisymmetric,
the symmetric component of $\F{1}$ does not play any role in the pairing.

Therefore, we will assume hereafter that a \emph{discrete one-form $\F{1}$ is defined by an antisymmetric matrix}: $\F{1}\in \frakSO(N)$.

\smallskip

\noindent
{\bf Remark I.} When viewed as acting on vector fields in the NHC space $\S$, our representation of discrete $1$-forms coincides with the use of $1$-cochains on the dual of $\mesh$~\cite{DeKaTo2005}: the value $\F{1}_{ij}$ (resp., $\F{1}_{ji}$) can be understood as the integral of a continuous $1$-form $\Ff{1}$ on the oriented dual edge going from cell $C_i$ to cell $C_j$ (resp., from $C_j$ to $C_i$). However, our use of antisymmetric matrices extends this cochain interpretation. This will become particularly useful when $1$-forms need to be paired with vector fields that have the form of the commutator $[A,B]$ of two vector fields $A$ and $B$ both in $\S$ as in Eq.~\eqref{eq:pairingWithCommutator}.
\smallskip

\noindent
{\bf Remark II.}
Notice finally that we can also define the notion of contraction of the volume form $\Omega$ by a discrete vector field $A$ using $\ii_A \Omega = 2 \Omega A $. The resulting matrix can be thought of as a discrete two form encoding the \emph{flux} of $A$ over each mesh face as derived in Section~\ref{sec:AasFlux}. In the notation convention of~\cite{Hirani2003}, this would be called a ``primal'' $2$-form, while the $2$-forms we will work with in this paper are ``dual'' $2$-forms. We won't discuss these primal $2$-forms further in this paper (as the construction of a consistent discrete calculus of forms and tensors is a subject on its own), but it is clear that they naturally pair with dual $1$-forms $\F{1}$ (forming a discrete wedge product between primal $2$- and dual $1$-forms), numerically resulting in the same value as the discrete pairing $\lla \F{1},A \rra$.

\subsection{Discrete Two-forms.}
We extend our definition of one-forms to two-forms in a similar fashion: discrete
$2$-forms will be encoded as $3$-tensors $\F{3}_{ijk}$ that are completely antisymmetric, i.e., antisymmetric with respect to any pair of indices.
\smallskip

\noindent
\textbf{Discrete Contraction.}
Contraction of a $2$-form $\F{2}$ by a vector field $A$ is defined as:
$$\lmat \ii_A \F{2} \rmat_{ij}=\sum_k \left( \F{2}_{ikj} A_{ik} - \F{2}_{jki} A_{jk}\right).$$
Notice again here that the resulting discrete $1$-form is indeed an antisymmetric
matrix (by construction), and that if $A \in \S$, many of the terms in the sum
vanish.
\smallskip

\noindent
\textbf{Discrete Total Pairing.}
The total pairing of a discrete $2$-form $\F{2}$ by two discrete vector fields $A$
and $B$, the discrete equivalent of $\int_M \Ff{2}(a,b) \; dV$, will be defined as:
\begin{equation}
\label{eq:pairing2Forms}
\lla \F{2}, A,B \rra \equiv 2 \sum_{i,j,k}\O_i \F{2}_{ijk}A_{ij}B_{ik}.
\end{equation}
This definition satisfies the expected property linking contraction and pairing: for
all $B\in\S,$
\[\lla \ii_A \F{2}, B \rra = \lla \F{2}, A,B \rra.\]
Indeed, using our previous definitions, we have:
{\allowdisplaybreaks
\begin{align*}
\lla \ii_A\F{2},B\rra &= \Tr (\O B (\ii_A\F{2})^T) =\sum_{i,j} \O_i B_{ij} (\ii_A\F{2})_{ij}\\
& = \sum_{i, j,k} \O_i \left( \F{2}_{ikj} A_{ik} - \F{2}_{jki} A_{jk} \right) B_{ij}\\
\left. \substack{\text{\small using }\F{2}_{ijk}=-\F{2}_{ikj}\\ \text{\small and }\O_iB_{ij} = - \O_jB_{ji}}\right\} &= \sum_i \left( \sum_{j,k} \left( - \O_i \F{2}_{ijk} A_{ik} B_{ij} + \O_j \F{2}_{jki} A_{jk} B_{ji}\right)\right)\\
&= -2\sum_i \left( \sum_{j,k}  \O_i \F{2}_{ijk} A_{ik} B_{ij} \right) = -\lla \F{2}, B,A \rra\\
& = \lla \F{2}, A,B \rra.
\end{align*}}

\subsection{Other Operators on Discrete Forms.}
A few more operators acting on $0$-, $1$-, or $2$-forms will be valuable to our
discretization of incompressible fluids.

\medskip

\noindent
\textbf{Discrete Exterior Derivative.} We can easily define a discrete version $\dd$
of the exterior derivative. For a discrete 0-form $\F{0},$ the one-form $\dd \F{0}$
is defined as
\[(\dd \F{0})_{ij}=\F{0}_j-\F{0}_i.\]
Similarly, if $\F{1}$ is a discrete one-form then we can define:
\[(\dd \F{1})_{ijk}=\F{1}_{ij}+\F{1}_{jk}+\F{1}_{ki}.\]
More generally, we define our operator $\dd$ as acting on a $k$-form $\F{k}$ through:
\[\lmat \dd \F{k} \rmat_{i_1 i_2 \hdots i_{k+1}} = \sum_{j\in [1..k+1]} (-1)^{j+1} \; \F{k}_{i_1 \hdots \widehat{ i_j } \hdots i_{k+1}}\]
where $\;\widehat{\cdot}\;$ indicates the omission of a term. This expression respects the antisymmetry of our discrete form representation.

\medskip

\noindent
{\bf Remark.}
 Notice here again that when the circumcenters of cells $C_{i_1}, C_{i_2}, \hdots,
C_{i_{k+1}}$ form a $k$-simplex on the dual of mesh $\mesh$, our definition of
$\dd$ simply enforces Stokes' theorem and thus coincides with the discrete
exterior derivative widely used in the literature~\cite{DeKaTo2005}. Our
discrete exterior derivative extends this simple geometric property to arbitrary
$(k+1)$-tuples of cells, while trivially enforcing that $\dd\circ \dd =0$ on the discrete level as well.

\medskip

\noindent
\textbf{Discrete Lie Derivative.} Now that we have defined contraction and
derivatives on discrete one-forms we can define the Lie derivative using Cartan's
``magic'' formula in the continuous setting $\mathbf{L}_v=\ii_v\dd+\dd\ii_v.$
\begin{Def}
Let $A$ be a discrete vector field satisfying the NHC and $\F{1}$ be a discrete one-form. Then the discrete Lie derivative
of $\F{1}$ along $A$ is defined as \[\mathbf{L}_A \F{1}=\ii_A\dd {\F{1}}+\dd\ii_A {\F{1}}.\]
\end{Def}

\begin{lemma}\label{LieDer}
For a vector field represented through an $\O$-antisymmetric and null-row
$A$, and a discrete closed one-form represented as a null-row and
antisymmetric $\F{1}$:
\begin{equation}
\label{eq:LieDerDef}
\mathbf{L}_A \F{1}= \, [A,\F{1}\O]\O^{-1}\, = \, A\F{1} - (A \F{1})^T.
\end{equation}
\end{lemma}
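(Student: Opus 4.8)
The plan is to unwind the definitions of $\ii_A$, $\dd$, and the pairing on a closed one-form, and to exploit the hypotheses ($A$ null-row $\O$-antisymmetric, $\F{1}$ null-row antisymmetric, $\dd\F{1}=0$) to collapse the Cartan formula into the stated matrix expression. First I would compute $\dd\ii_A\F{1}$: since $\ii_A\F{1}=\diag(A(\F{1})^T)$ is the zero-form with $i$-th entry $\sum_k A_{ik}\F{1}_{ki}$, applying $(\dd\F{0})_{ij}=\F{0}_j-\F{0}_i$ gives $(\dd\ii_A\F{1})_{ij}=\sum_k\big(A_{jk}\F{1}_{kj}-A_{ik}\F{1}_{ki}\big)$. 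Next I would compute $\ii_A\dd\F{1}$ using the two-form contraction formula $(\ii_A\F{2})_{ij}=\sum_k(\F{2}_{ikj}A_{ik}-\F{2}_{jki}A_{jk})$ with $\F{2}=\dd\F{1}$, where $(\dd\F{1})_{ikj}=\F{1}_{ik}+\F{1}_{kj}+\F{1}_{ji}$. Expanding, the terms containing $\F{1}_{ji}$ assemble (using the null-row property $\sum_k A_{ik}=0$) into a clean piece, and the remaining terms are exactly what is needed to combine with $\dd\ii_A\F{1}$.

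The heart of the computation is then to add the two contributions and simplify. After substitution, the sum $\ii_A\dd\F{1}+\dd\ii_A\F{1}$ should reduce — after cancellations driven by $\sum_k A_{ik}=0$ and the antisymmetry $\F{1}_{ij}=-\F{1}_{ji}$ — to $\sum_k(A_{ik}\F{1}_{kj}-\F{1}_{ik}A_{kj}) = (A\F{1})_{ij}-(\F{1}A)_{ij}$, i.e. the commutator $[A,\F{1}]_{ij}$. At this point it remains to reconcile $[A,\F{1}]$ with the two forms asserted in \eqref{eq:LieDerDef}. For the second expression, $A\F{1}-(A\F{1})^T$: I would note that $(A\F{1})^T=(\F{1})^T A^T=(-\F{1})(-A)=\F{1}A$ is where antisymmetry of $\F{1}$ is used, and the null-row/$\O$-antisymmetry of $A$ — wait, I should be careful: $A^T=-\O A\O^{-1}$, not $-A$. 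So I would instead argue that whatever matrix comes out of the Cartan computation is automatically antisymmetric as a discrete one-form (this is guaranteed by the antisymmetrization built into $\dd$ and $\ii_A$), and that $[A,\F{1}\O]\O^{-1}$, $A\F{1}-(A\F{1})^T$, and the raw commutator all agree once their symmetric parts (which are irrelevant to the one-form they represent) are discarded; concretely, $[A,\F{1}\O]\O^{-1}=A\F{1}-\F{1}\O A\O^{-1}=A\F{1}+\F{1}A^{T}$, and since $\F{1}A^T=\F{1}(-\F{1}^{-1}\text{-ish})$... rather, using $\O A$ antisymmetric so $A^T\O^{-1}=-\O^{-1}A$ gives $\F{1}A^T = -\F{1}\O^{-1}A\O$; I would verify that modulo symmetric matrices this equals $-(A\F{1})^T$, closing the loop.

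The main obstacle I anticipate is the bookkeeping in the middle step: the three-term expansion of $(\dd\F{1})_{ikj}$ inside the two-form contraction produces six index-sums, and it is easy to mis-track signs and which index the $\O$-weight or the null-row sum acts on. The closedness hypothesis $\dd\F{1}=0$ is what licenses replacing $\F{1}_{ik}+\F{1}_{kj}$ by $-\F{1}_{ji}=\F{1}_{ij}$ inside the relevant sums, and that substitution is the linchpin that turns the unwieldy expression into the commutator; getting that replacement applied in exactly the right places (and only there) is the delicate part. Once the commutator form $[A,\F{1}]$ is reached, the identification with the two displayed expressions in \eqref{eq:LieDerDef} is a short algebraic check using $\O A+A^T\O=0$ and $\F{1}^T=-\F{1}$, together with the standing convention that discrete one-forms are identified up to their symmetric part.
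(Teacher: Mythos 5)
Your overall plan---expanding Cartan's formula entrywise---is the same as the paper's, but the middle of your argument contains a genuine error that your fallback does not repair. Carried out correctly, the computation never produces the plain commutator $[A,\F{1}]=A\F{1}-\F{1}A$: using only the null-row property of $A$ (which kills the terms $\F{1}_{ji}\sum_k A_{ik}$ and $\F{1}_{ij}\sum_k A_{jk}$), one finds $(\ii_A\dd\F{1})_{ij}=(A\F{1})_{ij}-(A\F{1})_{ji}+(\F{1}A^T)_{ii}-(\F{1}A^T)_{jj}$, and the two diagonal terms are cancelled exactly by $(\dd\ii_A\F{1})_{ij}=(\F{1}A^T)_{jj}-(\F{1}A^T)_{ii}$. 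Hence $\mathbf{L}_A\F{1}=A\F{1}-(A\F{1})^T$ on the nose, and the first asserted form follows from $(A\F{1})^T=\F{1}^TA^T=\F{1}\O A\O^{-1}$, i.e. $[A,\F{1}\O]\O^{-1}$; no discarding of symmetric parts is ever needed. (Note also the sign slip: $(\ii_A\F{1})_i=\sum_k A_{ik}\F{1}_{ik}$, not $\sum_k A_{ik}\F{1}_{ki}$; with your sign the diagonal contributions double instead of cancel.) Your proposed rescue---that $[A,\F{1}]$, $A\F{1}-(A\F{1})^T$ and $[A,\F{1}\O]\O^{-1}$ agree modulo symmetric matrices---is false on a general mesh: the discrepancy is $[A,\F{1}]-\bigl(A\F{1}-(A\F{1})^T\bigr)=-\F{1}(A+A^T)$, which is not symmetric in general and vanishes only when $\O$ is a multiple of the identity (regular grids), the one case where your route closes. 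Indeed the correct output $A\F{1}-(A\F{1})^T$ is manifestly antisymmetric while $[A,\F{1}]$ is not, which already signals that the intermediate claim is off; and since the lemma is an exact matrix identity used later inside traces, ``up to symmetric parts'' is not a license the statement grants.

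The second misplaced ingredient is closedness. If you literally use $\dd\F{1}=0$ to replace $\F{1}_{ik}+\F{1}_{kj}$ by $\F{1}_{ij}$, then every entry $(\dd\F{1})_{ikj}$ vanishes, so $\ii_A\dd\F{1}=0$ and you would only be proving the formula for gradient one-forms $\F{1}_{ij}=f_j-f_i$---a degenerate case that is useless where the lemma is actually applied, namely to $\F{1}=A^\flat$, whose exterior derivative is the discrete vorticity and is nonzero in general. The paper's proof makes no use of closedness: the cancellations are driven solely by the null-row property of $A$ together with the $\dd\ii_A\F{1}$ term, and the hypotheses genuinely needed are that $A$ is null-row and $\O$-antisymmetric and that $\F{1}$ is antisymmetric. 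To repair your write-up, drop the closedness substitution, fix the sign in $\ii_A\F{1}$, and redo the bookkeeping as above; the identity then falls out exactly, with no quotienting by symmetric matrices.
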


\begin{proof}
As $A$ is null-row, we have
\[\sum_k{\F{1}}_{ij}A_{ik}={\F{1}}_{ij}\sum_kA_{ik} =0.\]
Therefore,
\begin{align*}
\lmat\ii_A\dd {\F{1}}\rmat_{ij}&=\sum_k(\lmat\dd
{\F{1}}\rmat_{ikj}A_{ik}-\lmat\dd
{\F{1}}\rmat_{jki}A_{jk})\\
&=\sum_k({\F{1}}_{ik}+{\F{1}}_{kj}+{\F{1}}_{ji})A_{ik}-\sum_k({\F{1}}_{jk}+{\F{1}}_{ki}+{\F{1}}_{ij})A_{jk}\\
&=\lmat{A\F{1}}\rmat_{ij}+\lmat{\F{1}}A^T\rmat_{ii}-\lmat
{A\F{1}}\rmat_{ji}-\lmat{\F{1}}A^T\rmat_{jj}.
\end{align*}
Now, since $A^T=-\O A\O^{-1}$ and $\F{1}^T=-\F{1}$, we can write:
\[\lmat A {\F{1}}\rmat_{ji}=\lmat (A{\F{1}})^T\rmat_{ij}=\lmat\F{1} \O A\O^{-1}\rmat_{ij}\]
and, therefore,
\[\lmat\ii_A\dd {\F{1}}\rmat_{ij}=\lmat [A,{\F{1}}\O]\O^{-1}\rmat_{ij}+\lmat {\F{1}}A^T\rmat_{ii}- \lmat {\F{1}}A^T\rmat_{jj}.\]
Also, one has
\[\ii_A\F{1}=\diag(\F{1}A^T),\]
therefore,
\[\lmat\dd\ii_A\F{1}\rmat_{ij}=\lmat\F{1}A^T\rmat_{jj}-\lmat\F{1}A^T\rmat_{ii},\]
which implies the result.
\end{proof}

Note that the resulting formula corresponds to an antisymmetrization of $A$ applied to $\F{1}$---leading, up to the volume form $\O$, to the commutator of $A$ and $\F{1}$.

\subsection{Discrete $L^2$-inner Product and Discrete Flat Operator.}\label{Flattening}
The Lagrangian for incompressible, inviscid fluid dynamics is the squared $L^2$-norm of the velocity field. Hence, we wish to define a discrete $L^2$-inner product between two discrete vector fields. Since we require spatial sparsity (NHC condition) of the velocity field $A$, and Lin constraints for its variation $\delta A=\dot B+[A,B],$ we are only concerned with vector fields in $\S\cup[\S,\S]$.

Recall that the continuous flat of a vector field $v$ is a 1-form $v^\flat$ such that
\[ \la v^\flat,w\ra= (v,w),\quad\text{for every vector field }w, \]
where $(v,w)$ is the $L^2$-inner product of vector fields. Since the discrete total pairing is essentially a Frobenius inner product, discretizing the $L^2$ inner product for vector fields is equivalent to discretizing the flat operator $\flat:A\mapsto A^{\flat}$ such that the pairing of matrices $\lla A^{\flat},B\rra$ approximates the inner product of vector fields integrated on $M$:
\[
\linner A,B\rinner = \lla A^{\flat},B\rra=\Tr(\O B (A^{\flat})^T)\underset{h\to 0}{\rightarrow}\int_M
(v,w)\;dV,\,\,\text{if }A\tendsto{} v \text{ and } B \tendsto{} w.
\]

Looking ahead, we will only use the $L^2$ inner product of the type $\linner A\!+\!\delta A, A\!+\!\delta A \rinner$ when taking variations of the Lagrangian. Therefore, we need only to define $L^2$ inner products of the form $\linner A,B \rinner$, $\linner A,[B,C] \rinner$ and $\linner [B,C],A \rinner$, for any $A, B, C\in \S$ (equivalently, $\lla A^\flat, B\rra$, $\lla A^\flat, [B,C]\rra$, and $\lla [B,C]^\flat, A\rra$). As our discrete $L^2$ inner product will be symmetric, we only need to focus on inner products of the form $\linner A, \cdot \rinner$ (resp., $\lla A^\flat, \cdot \rra$) for $A\in\S$. Note that this discrete $L^2$ inner product can \emph{not} be trivial: indeed, for any matrices $A,\,B,\,C\in\S$, we have $\Tr(A[B,C])=0$ because $\S\cap[\S,\S]=\{{\mathbf 0}\}$; but we could choose $A$, $B$ and $C$ that approximate vector fields $v$, $u$ and $w$ such that $\int_M(v,[u,w])\ne 0$. As we now introduce, we define our discrete symmetric $L^2$ inner product in a matter that satisfies a discrete version of the continuous identity $\int_M(v, [u,w])=-\int_M d v^\flat(u,w)$, which holds for divergence-free vector fields.

\begin{Def}
Consider a family of meshes $\mesh_h$ of size $h$. An operator
\[\flat_h:\S\to\dTO(\mesh_h)\]
is called a discrete flat operator if the following two conditions are satisfied:
\begin{multline}\label{flat1}
\lla A_h^{\flat_h},B_h\rra\to\int_M (v(x),u(x))dx,\quad\text{when }h\to 0,\\
\text{for every }A_h,\,B_h\in\S,\,A_h\to \mathbf{L}_v,B_h\to \mathbf{L}_u
\end{multline}
\begin{multline}\label{flat2}
\lla A_h^{\flat_h},[B_h,C_h]\rra\to \int_M (v(x),[u,w](x))dx,\quad\text{when }h\to
0,\\
\text{for every }A_h,\,B_h,\,C_h\in\S,\,A_h\to \mathbf{L}_v,B_h\to \mathbf{L}_u,\,C_h\to \mathbf{L}_w.
\end{multline}
\end{Def}
\noindent Note that in this definition, $\lla A^{\flat_h},X\rra$ approximates the continuous inner product both when $X\in\S$ \emph{and} when $X\in[\S,\S]$.

\medskip

The next lemma introduces a necessary and sufficient condition to guarantee the validity of a
discrete flat operator. This particular condition will be very useful when we study
the dynamics of discrete fluids, as it involves the vorticity $\omega=\nabla \times u$ of a vector field:
\begin{lemma}\label{flatteninglemma}
A family of operators $\flat_h$ satisfies condition~(\ref{flat2}) if and only if for
every $A_h,\,B_h,\,C_h\in\S$ approximating vector fields $v,\,u,\,w\in\SVect(M)$
respectively we have
\[
\lla \dd A_h^{\flat_h},B_h,C_h\rra\to\int_M\omega(u,w) dV,\quad\text{where }\omega=\dd v^\flat.
\]
\end{lemma}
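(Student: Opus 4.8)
The plan is to connect the pairing $\lla A_h^{\flat_h}, [B_h,C_h]\rra$ directly to $\lla \dd A_h^{\flat_h}, B_h, C_h\rra$ via the contraction/pairing identity already established for discrete two-forms, and then to match each side against its continuous counterpart using Cartan's formula. First I would recall that $A_h^{\flat_h}$ is, by construction (Section on discrete one-forms), an antisymmetric matrix, so $\dd A_h^{\flat_h}$ is a well-defined completely antisymmetric $3$-tensor (a discrete two-form). The key algebraic observation is that for $B_h, C_h \in \S$ the quantity $\lla A_h^{\flat_h}, [B_h,C_h]\rra$ should equal $\lla \dd A_h^{\flat_h}, B_h, C_h\rra$ exactly (or up to the sign conventions fixed in Eq.~\eqref{eq:pairing2Forms} and Lemma~\ref{LieDer}). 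To see this I would expand $[B_h,C_h]_{ij} = \sum_k (B_{ik}C_{kj} - C_{ik}B_{kj})$, insert it into $\lla A_h^{\flat_h},\cdot\rra = \Tr(\O\,[B_h,C_h]\,(A_h^{\flat_h})^T)$, and reorganize the triple sum; using the null-row property of $B_h,C_h$ and the antisymmetry of $A_h^{\flat_h}$, the terms reassemble into $\sum_{i,j,k}\O_i (\dd A_h^{\flat_h})_{ijk} B_{ij} C_{ik}$, which is exactly $\tfrac12\lla \dd A_h^{\flat_h},B_h,C_h\rra$ up to the factor $2$ in~\eqref{eq:pairing2Forms}. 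This is essentially a discrete incarnation of the continuous identity $\langle v^\flat,[u,w]\rangle = \dd v^\flat(u,w)$ combined with $\int_M \dd v^\flat(u,w)\,dV$ being what $\lla \dd A_h^{\flat_h},B_h,C_h\rra$ approximates; the manipulation parallels the computation already displayed in the proof that $\lla \ii_A\F{2},B\rra = \lla\F{2},A,B\rra$.

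Granting that exact algebraic identity, the lemma follows immediately: condition~\eqref{flat2} says the left-hand side $\lla A_h^{\flat_h},[B_h,C_h]\rra$ converges to $\int_M (v,[u,w])\,dV$, and the identity just derived says this left-hand side equals (a constant times) $\lla \dd A_h^{\flat_h},B_h,C_h\rra$; hence~\eqref{flat2} holds if and only if $\lla \dd A_h^{\flat_h},B_h,C_h\rra \to \int_M (v,[u,w])\,dV$. It then remains only to rewrite the continuous limit: for divergence-free $v,u,w$ we have $\int_M (v,[u,w])\,dV = -\int_M \dd v^\flat(u,w)\,dV$, and with $\omega = \dd v^\flat$ this is (again up to the sign/constant fixed by the conventions) $\int_M \omega(u,w)\,dV$. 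So the ``if and only if'' is just a chain of equivalences, with no analysis beyond what~\eqref{flat2} already supplies.

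The main obstacle, and the step I would be most careful about, is bookkeeping of the discrete commutator and its interaction with the non-holonomic sparsity: even though $B_h,C_h\in\S$, the commutator $[B_h,C_h]$ generally lies \emph{outside} $\S$ (it has support on ``two cells away'' pairs, as noted in Section~\ref{sec:NHC}), so one cannot use the simplified ``neighbor-only'' forms of contraction; one must work with the full matrix definitions of $\ii_A$, $\dd$, and the total pairing. In particular, I need to verify that $\dd A_h^{\flat_h}$, paired against $B_h$ and $C_h$ via~\eqref{eq:pairing2Forms}, correctly ``sees'' the commutator support — i.e. that the three terms $\F{1}_{ij}+\F{1}_{jk}+\F{1}_{ki}$ in $(\dd A^{\flat_h})_{ijk}$ are precisely what is produced when $[B_h,C_h]_{ij}=\sum_k(\cdots)$ is contracted against $A_h^{\flat_h}$. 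This amounts to checking that no boundary-like terms are dropped and that the null-row property of $B_h,C_h$ (rather than $\O$-antisymmetry) is exactly what kills the spurious $\F{1}_{ij}\sum_k B_{ik}$-type contributions, mirroring the first line of the proof of Lemma~\ref{LieDer}. Once that index-shuffling is pinned down, the remainder is a formality.
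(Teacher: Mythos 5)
Your overall plan is exactly the paper's: establish the exact discrete bridge identity $\lla \dd A_h^{\flat_h},B_h,C_h\rra=-\lla A_h^{\flat_h},[B_h,C_h]\rra$ for $B_h,C_h\in\S$, combine it with the continuous identity $\int_M(v,[u,w])\,dV=-\int_M \dd v^\flat(u,w)\,dV$ for divergence-free fields (which the paper actually proves, via $\ii_{[u,w]}v^\flat=\mathbf{L}_u\ii_w v^\flat-\ii_w\mathbf{L}_u v^\flat$ and Cartan's formula, rather than asserting it), and then read the lemma off as a chain of equivalences with condition~\eqref{flat2}. The genuine difference is how the bridge identity is verified: you propose a direct index expansion of $\Tr(\O[B,C](A^{\flat})^T)$, whereas the paper uses cyclicity of the trace together with Lemma~\ref{LieDer} (so that $[A^\flat\O,B]\O^{-1}=-\mathbf{L}_B A^\flat$) and then discards the exact part $\dd\ii_B A^\flat$ because $C$ is divergence-free; both routes work, the paper's mirroring the continuous argument operator by operator, yours being more elementary. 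Two bookkeeping points you deliberately left open do need to be settled, and they come out slightly differently from your guesses: (i) the reassembly uses not only the null-row property of $B,C$ (which kills the $A^\flat_{ij}\sum_k C_{ik}$ and $A^\flat_{ki}\sum_j B_{ij}$ terms in the expansion of $\lla\dd A^\flat,B,C\rra$) but also their $\O$-antisymmetry, which is what allows the weight $\O_i$ to be moved onto the common first index of $B$ and $C$ when reshuffling the commutator side; and (ii) the identity is exact, with a minus sign and \emph{no} factor $\tfrac12$: one finds $\lla A^\flat,[B,C]\rra=-2\sum_{i,j,k}\O_i A^\flat_{jk}B_{ij}C_{ik}=-\lla\dd A^\flat,B,C\rra$. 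Pinning down this constant is not cosmetic, since a stray factor of $2$ would put a spurious multiple in front of $\int_M\omega(u,w)\,dV$ in the statement; the minus sign then cancels against the minus in the continuous identity, giving exactly the limit claimed in the lemma.
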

\begin{proof}
First, let's show that for any $u,\,v,\,w\in\SVect(M)$
\[\int_M(v,[u,w])dx=\int_M -\dd v^\flat(u,w)dx.\]
Indeed, since
\[\int_M(v,[u,w])dx=\int_M\ii_{[u,w]}v^\flat\]
and (see~\cite{MaRa1999})
\[\ii_{[u,w]}v^\flat=\mathbf{L}_u\ii_wv^\flat-\ii_w\mathbf{L}_uv^\flat,\]
we have
\[\int_M(v,[u,w])dx=\int_M\mathbf{L}_u\ii_wv^\flat-\ii_w\mathbf{L}_uv^\flat\overset{\nabla \cdot u = 0}{=}-\int_M\ii_w\mathbf{L}_uv^\flat.\]
But, by Cartan's formula $\mathbf{L}_uv^\flat=\ii_u\dd v^\flat+\dd\ii_uv^\flat$. Therefore,
\[\int_M\ii_w\mathbf{L}_uv^\flat=\int_M\ii_w\ii_u\dd v^\flat=\int_M\dd v^\flat(u,w),\]
where we used the fact that $w\in\SVect(M)$ and therefore $\int_M
\ii_w\dd\ii_uv^\flat=0$.\\

Now, let's show that $\lla\dd A^\flat,B,C\rra=-\lla A^\flat,[B,C]\rra.$
Using properties of the trace operator we have
\[\lla A^{\flat},[B,C]\rra=\Tr(\O [B,C](A^\flat)^T)=-\Tr(A^\flat\O[B,C])=-\Tr([A^\flat\O,B]C).\]
By Lemma~\ref{LieDer}, $[A^\flat\O,B]\O^{-1}=-\mathbf{L}_BA^\flat$. Thus,
\[-\Tr([A^\flat\O,B]C)=\Tr((\mathbf{L}_B A^\flat)\O C)=-\Tr(\O C (\ii_B\dd A^\flat)^T)=-\lla\dd A^\flat,B,C\rra,\]
where we used that $\Tr(\O C(\dd \ii_B A^\flat)^T)=0$ because $C$ is divergence-free.
\end{proof}

\medskip

\noindent
\textbf{Discrete Vorticity in the Sense of DEC.}
As our derivation relies on having a predefined notion of discrete vorticity, we first provide a definition used in~\cite{Hirani2003,ElToKaScDe2005} (we will refer to it as the DEC vorticity, as it was derived from a Discrete Exterior Calculus~\cite{DeKaTo2005}):
\begin{equation}
\omega_{\text{DEC}}(e) = \sum_{\substack{(i,j) \\ e \subset (C_i\cap C_j)}} 2 \O_i\frac{|e_{ij}|}{|S_{ij}|}A_{ij} s_{ij},
\end{equation}
where $s_{ij}=1$ if the cells $C_i$ and $C_j$ are positively oriented around $e$ and $s_{ij}=-1$ otherwise. Notice this represents the integral of the vector field $A$ along dual edges $e_{ij}$ around the edge $e$: by Stokes' theorem, $\omega_{\text{DEC}}(e)$ is thus \emph{the vorticity of $A$ integrated over the dual Voronoi face to $e$} (see Figure~\ref{AFlatPic}, left). More importantly, it has been established that this approximation does converge (as long the mesh does not get degenerate) to the notion of vorticity in the limit of refinement~\cite{Bossavit1998}.

\begin{figure}[h!]
\centering
\includegraphics[width=4in]{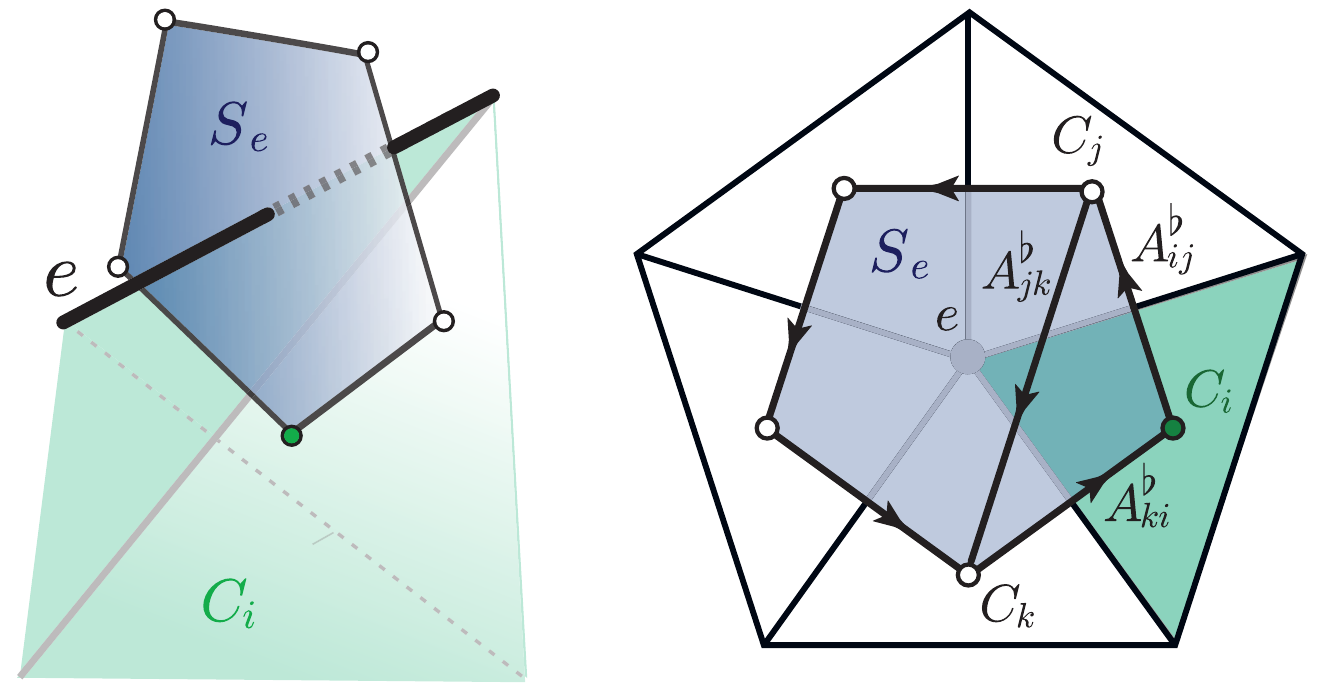}
\caption{\footnotesize Flat Operator: schematic representation of $A_{ij}^\flat$ as a part of the cell $S_e$ dual to edge $e$ in 3D (left) and a view of the dual cell seen straight along the edge (right). This last figure can also be seen as the 2D schematic version of the flat operator, where $e$ is now a vertex and $S_e$ is its associated dual Voronoi face.} \label{AFlatPic}
\end{figure}

\medskip

\noindent
\textbf{A Flat Operator on a 3-dimensional Mesh.}
From the previous lemma, we can derive a construction of a flat operator on a 3-dimensional
simplicial mesh. Given a matrix $A$, we need to find a matrix $A^\flat$ which satisfies the properties
\[ \lla A^\flat,A\rra = \lla A, A^\flat\rra\to\int_M \|v\|^2 \, dV,\]
and
\[\lla \dd A^\flat,B,C\rra\to\int_M\omega(u,w) \, dV.\]
To satisfy the first property we simply define the values of $A^\flat$ for \emph{immediate} neighbors as:
\begin{equation} \label{eq:flatAdjacent}
A^\flat_{ij}=A_{ij}\O_i\frac{2 |e_{ij}|}{|S_{ij}|}\quad\text{for }j\in N(i).
\end{equation}
Notice that it corresponds to the flux $2 \O_i A_{ij}$ of the velocity field, further multiplied by the diagonal Hodge star of $2$-forms for the face $S_{ij}$ (see, e.g.,~\cite{Bossavit1998}) to make $A^\flat$ a $1$-form on the dual edge between $C_i$ and $C_j$.

\smallskip

Enforcing the second property of the flat operator is more difficult; our construction will use the fact that in the limit, one must have
\[\int_M\omega(u,w) \, dV=\int_M *\omega\wedge u^\flat\wedge w^\flat.\]

\noindent Let's assume that the values of $A^\flat$ for adjacent cells are defined by Eq.~\eqref{eq:flatAdjacent}, and that the values of $A^\flat$ for \emph{non-adjacent} pairs
of cells $C_j$ and $C_k$ are \emph{defined} by:
\begin{equation}
\label{eq:flatNonAdjacent}
\lmat \dd A^\flat\rmat_{ijk}=A^\flat_{ij}+A^\flat_{jk}+A^\flat_{ki}
=K_{ijk}\;\omega_{\text{DEC}}(e_{ijk}),
\end{equation}
where $C_i$ is adjacent to both $C_j$ and $C_k$ (see Figure~\ref{AFlatPic} (right) for a schematic depiction), $e_{ijk}$ is the primal edge common to the cells $C_i$, $C_j$, $C_k$, and $K_{ijk}$ is a coefficient of proportionality whose exact expression will be provided later on. In other words, we assume that the flat operator allows us to evaluate vorticity not only on dual (Voronoi) faces as in the DEC sense, but on any triplet of cells $C_i, C_j, C_k$ as depicted in Figure~\ref{AFlatPic} (right); this will give us values of vorticity on \emph{subparts} of Voronoi faces as well.

Then the pairing $\lla \dd A^\flat,B,C\rra$ can be written (see Def.~\ref{eq:pairing2Forms}) as
\[\lla \dd A^\flat,B,C\rra=2 \sum_{i,\,j,\,k}\O_iK_{ijk}\;\omega_{\text{DEC}}(e_{ijk})\; B_{ij}C_{ik},\]
or, if one uses the flat of both vector fields $B$ and $C$,
\begin{equation} \label{dAflat}
\lla \dd A^\flat,B,C\rra=\half\sum_{i,\,j,\,k}\O_i\widetilde{K}_{ijk}\;\omega_{\text{DEC}}(e_{ijk})\; B^\flat_{ij}C^\flat_{ik},
\end{equation}
where
\[B^\flat_{ij}=\O_i B_{ij}\frac{2|e_{ij}|}{|S_{ij}|},\quad C^\flat_{ik}=\O_i C_{ik}\frac{2|e_{ik}|}{|S_{ik}|}, \;\;
\text{ and } \widetilde{K}_{ijk}=K_{ijk}\frac{1}{\O_i^2}\frac{|S_{ij}|}{|e_{ij}|}\frac{|S_{ik}|}{|e_{ik}|}.\]

Now, suppose we have a discrete version of the wedge
product (e.g.,~\cite{Hirani2003,SeSeSeAd2000}) between two dual one-forms, written with given weights $W_{ijk}$  as
\[
(B^\flat\wedge C^\flat)_{S_{e_{ijk}}}=\sum_{\substack{i,j,k\\ e_{ijk}=C_i\cap C_j\cap C_k}} W_{ijk} B^\flat_{ij} C^\flat_{ik},
\]
where $S_{e_{ijk}}$ is the two-dimensional face dual to the primal edge $e_{ijk}$ and the sum is taken as before over all consecutive cells $i$, $j$ and $k$ which have $e_{ijk}$ as a common edge. If we further define
\[\widetilde{K}_{ijk}=2W_{ijk}\frac{|e|}{\O_i |S_{e_{ijk}}|}\]
(where, as usual, $|e_{ijk}|$ denotes the length of the edge $e_{ijk}$ and $|S_{e_{ijk}}|$ is the area of the dual face $S_{e_{ijk}}$), we can reexpress equation \eqref{dAflat}
by summing over all edges $e_{ijk}$ to find a simple wedge-product-based version of the
total pairing of  the vorticity with two vector fields:
\[ \lla
\dd A^\flat,B,C\rra=\sum_e\omega_{\text{DEC}}(e)\frac{|e|}{|S_e|}(B^\flat\wedge
C^\flat)_{S_e}\approx \int_M(*\o)\wedge u^\flat\wedge w^\flat.
\]
Thus, we can derive the flat operator $\flat$ once a set of coefficients $W_{ijk}$ is known: given
a vector field $A\in\S$, $A^\flat$ for adjacent cells is defined using equation \eqref{eq:flatAdjacent}, while the rest of its non-zero values are defined such that
\[
A^\flat_{ij}+A^\flat_{jk}+A^\flat_{ki} = K_{ijk}\;\omega_{\text{DEC}}(e_{ijk}),
\]
where:
\[
K_{ijk}=2W_{ijk}\O_i\frac{|e_{ij}|\;|e_{ik}|}{|S_{ij}|\;|S_{ik}|}\frac{|e_{ijk}|}{|S_{e_{ijk}}|} \quad \text{for } e=C_i\cap C_j\cap C_k.
\]

A concrete expression of $W_{ijk}$ can be used by extending the definition of the primal-primal wedge product given in~\cite{Hirani2003} (Definition 7.1.1) to the dual in a straightforward fashion to make it exact for constant volume 2-forms through:
\[W_{ijk}=s_{ijk}\frac{|S_{e_{ijk}}\cap C_i|}{|\Delta_{ijk}|},\]
where $\Delta_{ijk}$ is a triangle with vertices at the (circum)centers of the cells $C_i$, $C_j$ and $C_k$, and $s_{ijk}=1$ if the triplet of cells $C_i,C_j,C_k$ is positively oriented around $e$ and $s_{ijk}=-1$ otherwise.

To simplify the expression for $K_{ijk}$ we use the equality
$|\Delta_{ijk}|=\frac12 |e_{ij}|\;|e_{ik}|\sin\a_{ijk}$, where $\a_{ijk}$ is the angle
between dual edges, yielding:
\[
K_{ijk}=4s_{ijk}\O_i|e|\frac{1}{\sin\a_{ijk}}\frac{1}{|S_{ij}|\;|S_{ik}|}\frac{|S_{e_{ijk}}\cap C_i|}{|S_{e_{ijk}}|}.
\]
Now, applying the generalized law of sines for the volume of a tetrahedron yields
\[\O_i=\frac{2}{3|e_{ijk}|}|S_{ij}| \; |S_{ik}| \; \sin\a_{ijk}\]
and thus
\[K_{ijk}=\frac{8}{3}s_{ijk}\frac{|S_{e_{ijk}}\cap C_i|}{|S_{e_{ijk}}|}.\]
This formula was used in the implementation of our method as described in~\cite{MuCrPaToDe2009} (note that the wedge product was rewritten as a function of the flux $F_{ij} = 2\O_iA_{ij}$).

\medskip

\noindent
\textbf{Flat Operator on Regular Grids in 2D.}
Our construction of the flat operator is particularly simple for regular (Cartesian)
grids as we now review for completeness.
\begin{lemma}
\label{lem:FlatRegular}
For a domain represented with a \emph{Cartesian grid} of size $h$, let $A$ be an
antisymmetric doubly-null matrix satisfying the NHC. The operator
$\flat:A\mapsto A^{\flat}$ defined as
\begin{equation*}
A_{ij}^\flat = 2 h^2A_{ij},\quad\text{for $i\in N(j)$,}
\end{equation*}
\begin{equation*}
A_{ij}^\flat = h^2\sum_{k\in N(i)\cap
N(j)} (A_{ik}+A_{kj}),\quad\text{for $i\ne N(j)$}
\end{equation*}
is a discrete flat operator.
\end{lemma}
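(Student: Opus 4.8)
The plan is to verify the two defining conditions~\eqref{flat1} and~\eqref{flat2} of a discrete flat operator, using Lemma~\ref{flatteninglemma} to replace~\eqref{flat2} by the equivalent vorticity statement $\lla\dd A^\flat,B,C\rra\to\int_M\omega(u,w)\,dV$ with $\omega=\dd v^\flat$ (in $2$D a top form, which we freely identify with its scalar component $\partial_1v_2-\partial_2v_1$, so $\omega(u,w)=\omega\det(u,w)$). Two facts about a Cartesian grid of size $h$ will be used repeatedly: $\O_i=h^2$ and $|S_{ij}|=|e_{ij}|=h$ for every adjacent pair; and, by the flux computation of Section~\ref{sec:AasFlux}, any $A\in\S$ with $A\rightsquigarrow v$ satisfies $A_{ij}=\tfrac1{2h}(v(x_{ij}),\vec n_{ij})+O(h)$, so that $A^\flat_{ij}=2h^2A_{ij}=h\,(v(x_{ij}),\vec n_{ij})+O(h^3)$ agrees to this order with $\int_{e_{ij}}v^\flat$. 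The fields are assumed smooth enough for the midpoint estimates below; contributions from a boundary layer of width $h$ are discarded, using $v\cdot n=0$ on $\partial M$ for~\eqref{flat1} and the fact that only interior grid vertices are shared by four cells for~\eqref{flat2} --- in both cases the discarded part is $O(h)$.

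\textbf{Condition~\eqref{flat1}.} Since $B\in\S$, only adjacent pairs survive in $\lla A^\flat,B\rra=\sum_{i,j}\O_iB_{ij}A^\flat_{ij}$, hence $\lla A^\flat,B\rra=\sum_i\sum_{j\in N(i)}h^2B_{ij}(2h^2A_{ij})=2h^4\sum_i\sum_{j\in N(i)}A_{ij}B_{ij}$. Inserting the flux estimates and pairing each ordered pair $(i,j)$ with $(j,i)$ --- which contributes the same value since $A$ and $B$ are antisymmetric --- gives $\lla A^\flat,B\rra=h^2\sum_{\text{faces }S}(v(x_S),\vec n_S)(u(x_S),\vec n_S)+O(h^2)$. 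Splitting the faces according to which coordinate axis their normal lies along turns the leading term into a midpoint Riemann sum for $\int_M v_1u_1\,dV+\int_M v_2u_2\,dV=\int_M(v,u)\,dV$; as there are $O(h^{-2})$ faces, each carrying an $O(h^4)$ error, the remainder vanishes as $h\to0$. In particular $\lla A^\flat,A\rra\to\int_M\|v\|^2\,dV$, so $\tfrac12\linner A,A\rinner$ converges to the fluid kinetic energy.

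\textbf{Condition~\eqref{flat2}.} In $\lla\dd A^\flat,B,C\rra=2\sum_{i,j,k}\O_i(\dd A^\flat)_{ijk}B_{ij}C_{ik}$ a summand can be nonzero only when $j,k\in N(i)$. If $j=k$ then $(\dd A^\flat)_{ijj}=0$; if $C_j,C_k$ are opposite neighbours of $C_i$ the antisymmetrized factor $B_{ij}C_{ik}-B_{ik}C_{ij}$ vanishes to leading order (the normals $\vec n_{ij},\vec n_{ik}$ being antiparallel), and a short estimate shows such terms contribute only $O(h)$ in total. The main contribution comes from triplets in which $C_j$ and $C_k$ meet $C_i$ at a common grid vertex $x_v$; then $C_j$ and $C_k$ are diagonal, $N(j)\cap N(k)=\{i,\ell\}$ with $C_\ell$ completing the $2\times2$ block about $x_v$, and the non-adjacent rule for $A^\flat$ collapses to
\[(\dd A^\flat)_{ijk}=A^\flat_{ij}+A^\flat_{jk}+A^\flat_{ki}=\tfrac12\bigl(A^\flat_{ij}+A^\flat_{j\ell}+A^\flat_{\ell k}+A^\flat_{ki}\bigr),\]
i.e.\ half the discrete circulation of $A^\flat$ around the square $S_v$ dual to $x_v$ --- this is exactly the DEC vorticity at $x_v$ up to the factor $\tfrac12$, and by Stokes it equals $\tfrac12\int_{S_v}\dd v^\flat+O(h^3)=\tfrac12 h^2\omega(x_v)+O(h^3)$ since $|S_v|=h^2$. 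Collecting at $x_v$ the two orderings of $(j,k)$ for each apex $C_i$, the relevant part of the pairing reads $\sum_{i\text{ at }x_v}2\O_i(\dd A^\flat)_{ijk}\bigl(B_{ij}C_{ik}-B_{ik}C_{ij}\bigr)$; since $\vec n_{ij}\perp\vec n_{ik}$ here, the flux estimates give $B_{ij}C_{ik}-B_{ik}C_{ij}=\pm\tfrac1{4h^2}\bigl(u_1w_2-u_2w_1\bigr)+O(1)=\pm\tfrac1{4h^2}\det\!\bigl(u(x_v),w(x_v)\bigr)+O(1)$, and one checks that the sign agrees with that of $(\dd A^\flat)_{ijk}$ once the triplet is oriented positively around $x_v$. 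Hence each apex contributes $\approx2h^2\cdot\tfrac12 h^2\omega(x_v)\cdot\tfrac1{4h^2}\det(u(x_v),w(x_v))=\tfrac{h^2}4\,\omega(x_v)\det(u(x_v),w(x_v))$, the four apexes yield equal values by the rotational symmetry of the stencil, and summing over all $O(h^{-2})$ vertices gives a midpoint Riemann sum converging to $\int_M\omega(x)\det(u,w)\,dV=\int_M\omega(u,w)\,dV$, the per-vertex $O(h^3)$ errors summing to $O(h)$. By Lemma~\ref{flatteninglemma} this is~\eqref{flat2}, so $\flat$ is a discrete flat operator.

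The two Riemann-sum limits are routine once the consistency estimates of Section~\ref{sec:AasFlux} are granted. The real work is the bookkeeping in the last paragraph: identifying which triplets $(i,j,k)$ actually contribute, collapsing the non-adjacent value $A^\flat_{jk}$ into a half-circulation around the dual square, and --- the most delicate point --- checking that the orientation signs $s_{ijk}$ line up so that $B_{ij}C_{ik}-B_{ik}C_{ij}$ reproduces $\det(u,w)$ with the same sign with which $(\dd A^\flat)_{ijk}$ reproduces the scalar vorticity, uniformly over the four apex choices and over all corner configurations. One must also keep orders of accuracy straight, since each of the $\Theta(h^{-2})$ local terms carries an $O(h^3)$ error in~\eqref{flat2} (resp.\ $O(h^4)$ in~\eqref{flat1}) that must still sum to zero; this is what forces evaluation at circumcentric points and the use of the divergence-free and no-penetration hypotheses to suppress boundary contributions.
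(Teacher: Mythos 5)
Your overall route is the paper's own: check~\eqref{flat1} directly from the flux interpretation of $A$, and reduce~\eqref{flat2} via Lemma~\ref{flatteninglemma} to showing $\lla\dd A^\flat,B,C\rra\to\int_M\omega(u,w)\,dV$ by identifying $(\dd A^\flat)_{ijk}$ on a triplet of cells meeting at a grid vertex with half the circulation of $A^\flat$ around the dual square, i.e.\ $\tfrac12\omega_{\text{DEC}}$, and summing a Riemann sum over vertices. That part of your argument (the half-circulation identity, the sign alignment over the four apexes, and the~\eqref{flat1} computation) is sound and is in fact more detailed than the paper's sketch. The genuine problem is exactly the step you wave through as ``a short estimate'': the triplets in which $C_j,C_k$ are \emph{opposite} neighbours of $C_i$. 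For such a collinear triple the two outer cells share only the single common neighbour $C_i$, so the formula as stated gives $A^\flat_{jk}=h^2(A_{ji}+A_{ik})$, hence $(\dd A^\flat)_{ijk}=2h^2A_{ij}+h^2(A_{ji}+A_{ik})+2h^2A_{ki}=h^2(A_{ij}-A_{ik})\approx h\,(v(x_i),\vec n_{ij})$. This is of order $h$ and proportional to the \emph{velocity}, not of order $h^2$ and proportional to the vorticity (the degenerate ``triangle'' has zero area, yet $\dd A^\flat$ does not vanish on it). Your antisymmetrized factor $B_{ij}C_{ik}-B_{ik}C_{ij}$ is indeed smaller than in the corner case, but only by one power of $h$: it is $\approx\tfrac1{4h}\bigl(u_n\partial_nw_n-w_n\partial_nu_n\bigr)$ evaluated at $x_i$. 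So each collinear triple contributes $2\O_i\cdot O(h)\cdot O(h^{-1})=O(h^2)$ --- the \emph{same} order as a corner triple --- and summing over the $\Theta(h^{-2})$ cells gives a limit $\pm\tfrac12\int_M\bigl[v_1(u_1\partial_1w_1-w_1\partial_1u_1)+v_2(u_2\partial_2w_2-w_2\partial_2u_2)\bigr]dV$, which is generically nonzero (e.g.\ on the torus with $u=(\sin x\cos y,-\cos x\sin y)$, $w=(\cos x\cos y,\sin x\sin y)$, $v=(\cos^2y,0)$). The claim that these terms are $O(h)$ in total is therefore false as argued, and with the coefficient $h^2$ taken literally for collinear pairs the limit acquires a spurious non-vorticity term.

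The cure, and what the paper implicitly does, is to make $\dd A^\flat$ vanish identically on degenerate collinear triples: take $A^\flat_{jk}=2h^2(A_{ji}+A_{ik})$ when $C_j,C_k$ are two apart \emph{along an axis}. This is precisely the convention appearing later in the update-rule section (where $\o_{ik}=0$ for such pairs), and it agrees with the lemma's displayed formula only for diagonal pairs, where both give $h^2$ times the four surrounding transfers; for collinear pairs they differ by a factor of two. The paper's own argument never meets this issue because it only evaluates $\dd A^\flat$ on the four cells around a node; you, to your credit, noticed that the pairing $\lla\dd A^\flat,B,C\rra$ also runs over the collinear triples, but the estimate you use to discard them does not hold for the operator as written. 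To repair the proof you must either adopt the factor-two (equivalently, $\dd A^\flat\equiv 0$ on collinear triples) convention --- after which these triples contribute exactly zero and the rest of your argument goes through --- or compute their contribution honestly, in which case it does not vanish.
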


Note that while $A$ satisfies the NHC, $A^\flat$ has non-zero elements
for neighboring cells and for cells that share a common neighbor (i.e., two cells
away). Now, let $i,\,k,\,j,\,l$ be four cells on a regular mesh sharing a common
node $x$, oriented counter-clockwise (see figure \ref{AFlatRegularpic}). Then it is easy to see that for $A^\flat$
defined above we have
$$\dd A^\flat_{ijk}=h^2(A_{ik}+A_{kj}+A_{jl}+A_{li})=\frac{\omega_{\text{DEC}}(x)}{2},$$
where $\omega_{\text{DEC}}(x)$ is the discrete vorticity in the sense of
Discrete Exterior Calculus integrated over the dual cell of node
$x$. Since $\omega_d$ converges to vorticity the condition of Lemma~\ref{flatteninglemma}
is satisfied, just as in the simplicial mesh case.

\begin{figure}[h!]
\centering
\includegraphics[width=1.5in]{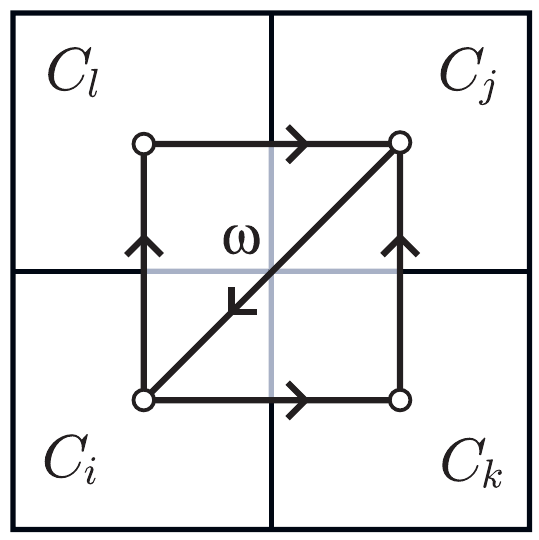}\vspace*{-2mm}
\caption{\footnotesize Flat Operator on a Regular Grid in 2D: our definition of the flat operator is particularly simple
when the spatial discretization is a regular mesh.\vspace*{-4mm}} \label{AFlatRegularpic}
\end{figure}

\section{Dynamics on the Group of $\O$-orthogonal Stochastic Matrices}
We now focus on defining a Lagrangian on the tangent bundle of the group $\TO(\mesh)$
of $\O$-orthogonal, signed stochastic matrices and studying the corresponding
variational principle with non-holonomic constraints. We will first assume a
discrete-space/continuous-time setup before presenting a fully discrete version.

\subsection{Variational Principle and Symmetry.}
We wish to study dynamics on the Lie group $\TO(\mesh)$ of $\O$-orthogonal, signed stochastic matrices representing volume-preserving diffeomorphisms on a mesh $\mesh$. While the group's Lie algebra $\dTO(\mesh)$ consists of null-row $\O$-antisymmetric matrices, we restrict the Eulerian velocity $A=- \dot qq^{-1}$ to lie in the NHC space $\S$, i.e., with fluid transfer happening only between adjacent cells (see Section~\ref{sec:NHC}).

We first establish a discrete Lagrangian $\L_h(q,\dot q)$ on $T\TO(\mesh)$ with the property that
$\L_h\longto{h\to 0}\frac12 \int \|v\|^2$ for $A \tendsto{h\to 0} v$ by defining:
\begin{equation*}
\L_h(A)=\frac12 \llangle A^\flat,A\rrangle\equiv\frac12\Tr (\Omega A (A^\flat)^T).
\end{equation*}

When $A$ satisfies the NHC, it was shown in Section~\ref{Flattening} that $\lla A^\flat, A\rra\to \int (v,v)$; thus the discrete Lagrangian is a proper approximation to the $L^2$-norm of the velocity field in this case. Note also that it is trivially right invariant as in the continuous case, since one can compose $q$ by a discrete diffeomorphism $\eta$ without changing the Eulerian velocity $A=-(\dot{q\eta})(q\eta)^{-1}=-\dot q q^{-1}$. Our discrete setup thus respects \emph{particle relabelling symmetry}.

\subsection{Computing Variations.}
\label{DiscreteLinConstraints}
To compute the variation of $A(t)$, we assume that
$q$ depends on a parameter $s$, we denote $q'=\frac{dq}{ds}$ and $\dot
q=\frac{dq}{dt}$, and we differentiate the Eulerian velocity:
\[ \frac{d}{ds}A(s,t)=-\dot q'q^{-1}+\dot qq^{-1}q'q^{-1}. \]
If we denote by $B$ the vector field satisfying $B=-q'q^{-1},$ we directly
get the well-known \emph{Lin constraints}:
\begin{equation} \frac{d}{ds}A(s,t)=\dot B+[A,B], \label{eq:discreteLin}\end{equation}
where $[A,B]=AB-BA$ is the commutator of matrices.

Now remember that the dynamics of systems with non-holonomic constraints
can be derived from the \emph{Lagrange-d'Alembert principle}:
\begin{equation}
\delta\int_0^1 \L_h(A)\; dt=0,\quad \delta q\in S_q,\quad A\in S,\quad \delta q(0) = \delta q(1) = 0
\label{eq:Discreted'AlembertPrinciple}
\end{equation}

Since $\delta q\in S_q$, the vector field $B$ must be in $\S$, i.e., $B_{ij}=0$
except for neighboring cells $C_i$ and $C_j$. We can then compute $\delta \L_h$:
\[
\delta \L_h(A)=\frac12\left(\lla \delta A^\flat,A\rra+\lla A^\flat,
\delta A\rra\right)=\lla A^\flat, \delta A\rra.
\]
As we restrict $A$ to lie in the NHC subspace $\S$, the Lin constraints in
Eq.~\eqref{eq:discreteLin} imply
\[ \delta \L_h(A)=\lla A^\flat,\dot B+[A,B]\rra. \]
Recall that if $A$ approximates $\mathbf{L}_v$ and $B$ approximates $\mathbf{L}_\xi$, then by definition of $\flat$,
\[\lla A^\flat,\dot B\rra\to \int_M (v,\dot\xi) \,dV\]
and
\[\lla A^\flat,[A,B]\rra\to \int_M (v,[v,\xi]) \,dV.\]
Thus,
\[\delta \L_h(A)\to \int_M(v,\dot\xi+[v,\xi]) \,dV=\delta l(v),\]
so the discrete Lagrangian (resp., its variation) is an approximation of the
continuous Lagrangian (resp., its variation).\\

Since $A^\flat$ is antisymmetric and $\Tr(A[B,C])=\Tr([A,B]C)$ for any matrices $A,\,B,\,C$,  we get
\[\delta \L_h(A)=\Tr\bigl(-A^\flat\O\dot B - [A^\flat\O,A]B\bigr).\]
After integration by parts and because variations are zero at each extremity of the time interval $[0,1]$, the discrete Euler-Lagrange equations of Eq.~\eqref{eq:Discreted'AlembertPrinciple} are:
\begin{equation}
\label{eq:DELraw}
\forall B\in\S, \quad \Tr\bigl((\dot A^\flat\O+[A, A^\flat\O])B\bigr)=0.
\end{equation}
\smallskip

To express the resulting equations in a more intuitive fashion, we introduce the following lemma:
\begin{lemma}
\label{lemma:pressure}
If matrix $Z \in \mathcal{M}^N$ is antisymmetric, with $\Tr(ZY^T)=0$ for every $Y\in\S$ then there exists a discrete pressure field, i.e., a vector $P=(p_1,\ldots,p_N)$ such that
\[ Z_{ij}=p_j-p_i,\quad\text{where }j\in N(i). \]
\end{lemma}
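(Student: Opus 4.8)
The plan is to read the hypothesis as a statement on the \emph{cell-adjacency graph} $\mathcal G$ of $\mesh$ --- whose vertices are the cells $C_i$ and whose edges are the interior faces $S_{ij}$ --- and then to invoke the elementary fact that on a connected graph a discrete one-form that sums to zero around every cycle is a discrete gradient. First I would restrict attention to the entries of $Z$ that matter: since $\Tr(ZY^T)=\sum_{i,j}Z_{ij}Y_{ij}$ and every $Y\in\S$ vanishes unless $j\in N(i)$, only the entries $Z_{ij}$ with $j\in N(i)$ enter the hypothesis, and by antisymmetry these amount to a single scalar $Z_{ij}=-Z_{ji}$ per edge of $\mathcal G$; thus $Z$ is an antisymmetric edge-function on $\mathcal G$ and the claim is precisely that it is a coboundary, $Z_{ij}=p_j-p_i$.

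Next I would describe $\S$ concretely on $\mathcal G$. A matrix $Y$ lies in $\S$ exactly when $\O Y$ is antisymmetric, sparse, and has vanishing row sums; equivalently, writing $f_{ij}=\O_iY_{ij}=-\O_jY_{ji}$, the function $f$ is an antisymmetric edge-function with zero net flow $\sum_{j\in N(i)}f_{ij}=0$ at every vertex --- that is, $f$ ranges over the space of circulations ($1$-cycles) of $\mathcal G$, consistent with the interpretation of $\O A$ as dual $1$-chains noted earlier in the paper. Rewriting $\Tr(ZY^T)$ through $f$, the hypothesis becomes: $Z$ (up to strictly positive weights built from the cell volumes, which do not alter which edge-functions are circulations) pairs to zero with every circulation of $\mathcal G$. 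To extract $p$, I would pick a spanning tree $T$ of $\mathcal G$ --- which exists because $\mesh$, and hence $\mathcal G$, is connected --- root it at $C_1$, set $p_1=0$, and define $p_i$ as the sum of the increments $Z_{ab}$ along the tree-path from $C_1$ to $C_i$; then $Z_{ij}=p_j-p_i$ holds by construction on every tree edge. For a non-tree face $\{i,j\}$, the circulation supported on the fundamental cycle of $T\cup\{ij\}$ is an element of $\S$, and feeding it into the hypothesis gives $Z_{ij}+\bigl(\text{sum of }Z\text{ along the tree-path from }C_j\text{ to }C_i\bigr)=0$, i.e.\ $Z_{ij}=p_j-p_i$ on that face as well. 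Since the fundamental cycles span all circulations, this exhausts the hypothesis and the lemma follows.

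I expect the main obstacle to lie in the second step: one must verify that the three conditions defining $\S$ (sparsity, $\O$-antisymmetry, null rows) really do let $\S$ surject onto the \emph{entire} cycle space of $\mathcal G$, so that pairing against $\S$ constrains $Z$ around every cycle and not merely a proper subspace; and one must check that the volume weights entering the rewritten pairing combine so that ``$Z$ annihilates the circulation carried by a cycle'' is equivalent to the unweighted telescoping identity ``$Z$ sums to zero around that cycle'' (this is automatic when all $\O_i$ are equal, e.g.\ on a Cartesian grid, and in general hinges on the precise normalization of the pairing). Once that is settled, the spanning-tree construction is routine and the path-independence of $p$ --- which is exactly the vanishing of $Z$ around all cycles --- completes the proof; the additive freedom in $p$ (the choice $p_1=0$) is harmless since only the differences $p_j-p_i$ occur.
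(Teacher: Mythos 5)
Your route---identifying $\S$ with the cycle space of the cell-adjacency graph and then producing $p$ constructively from a spanning tree and its fundamental cycles---is genuinely different from the paper's proof, which is a pure dimension count: discrete gradients form an $(N-1)$-dimensional space orthogonal, in the Frobenius pairing, to the null-row antisymmetric sparse matrices, while the constraint space has codimension $N-1$ among the antisymmetric sparse matrices, so its annihilator is exactly the space of gradients. Both arguments are incarnations of the cycle/coboundary decomposition of the edge space of a connected graph, and the surjectivity you worry about is immediate: any circulation $f$ on the graph yields $Y_{ij}=f_{ij}/\O_i$, which is sparse, $\O$-antisymmetric and null-row, so $\S$ does map onto the whole cycle space.

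The genuine problem is the volume-weight ``check'' you deferred: it is not a formality, and as you set things up it fails. With $Y\in\S$ and $f_{ij}=\O_iY_{ij}$, grouping the Frobenius pairing over unordered faces gives $\Tr(ZY^T)=\sum_{\{i,j\}}\bigl(\O_i^{-1}+\O_j^{-1}\bigr)Z_{ij}f_{ij}$, so the hypothesis only says that the \emph{reweighted} edge function $\bigl(\O_i^{-1}+\O_j^{-1}\bigr)Z_{ij}$ annihilates every circulation, hence is a gradient; it does not force $Z_{ij}=p_j-p_i$ unless all cell volumes coincide. Concretely, for three mutually adjacent cells with volumes $1,2,3$ the hypothesis is the single relation $\tfrac32 Z_{12}+\tfrac56 Z_{23}+\tfrac43 Z_{31}=0$, which does not imply the telescoping identity $Z_{12}+Z_{23}+Z_{31}=0$, so no potential exists and your fundamental-cycle step cannot close. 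The way out is the reading the paper itself uses: in the proof of the main theorem the lemma is invoked with $Y=\O B$ for $B\in\S$, i.e.\ $Y$ ranging over null-row \emph{antisymmetric} sparse matrices (the circulations themselves), which is also the space the paper's own proof manipulates; for that family the pairing is the unweighted cycle pairing $2\sum_{\{i,j\}}Z_{ij}f_{ij}$, and your spanning-tree construction then goes through verbatim. So you should either prove the lemma with $Y$ ranging over $\O\S$ (matching how it is applied), or restrict to equal-volume meshes; as written, your final step does not follow for general $\O$.
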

\begin{proof}
Since $Y\in\S$, the inner product of matrices $\Tr(Z Y^T)$ does not depend on
$Z_{ij}$ when $i$ and $j$ are not direct neighbors. We can thus assume that
$Z\in\N1$. The space $\S$ has codimension $N-1$ in the space $\N1$. Indeed, it
is defined by a system of $N-1$ independent equations:
\[ \sum_{j\in N(i)} Y_{ij}=0,\quad 1\le i\le N-1, \]
the last equation for $i=N$ being automatically enforced by the others.

Moreover, the space of discrete gradients (i.e., matrices $M \in \N1$ such as $M_{ij}=p_j-p_i$) is orthogonal to the space of null-row antisymmetric matrices w.r.t. the Frobenius inner product $M\cdot Y=\Tr(MY^T)$ and has dimension $N-1$. Therefore, the orthogonal complement to $\S$ in $\N1$ coincides with the space of discrete gradients.
\end{proof}

We directly deduce our main theorem:
\begin{theorem}
Consider the discrete-space/continuous time Lagrangian on $T\TO(\mesh)$:
\[ \L_h(A)=\frac12\lla A^\flat,A\rra, \]
where $A=-\dot qq^{-1} \in \S$ is a sparse, null-row, and $\O$-antisymmetric matrix, $q\in \TO(\mesh)$ is a signed stochastic $\O$-orthogonal matrix, and $A^\flat$ is the discrete flat operator defined in Section~\ref{Flattening} applied to $A$. Then the Lagrange-d'Alembert principle
\[ \delta\int_0^1 \L_h(A)\; dt=0,\quad \delta q\in S_q,\quad A\in S,\quad \delta q(0) = \delta q(1) = 0 \]
implies
\begin{equation}\label{EL}
\lmat\dot A^\flat+\mathbf{L}_AA^\flat+\dd p\rmat_{ij}=0,\quad\text{for }j\in N(i),
\end{equation}
or, equivalently,
\begin{equation}\label{EL2}
\dot A_{ij}+\frac{|S_{ij}|}{2\O_i\O_j
|e_{ij}|}[A,A^\flat\O]_{ij}+\frac{|S_{ij}|}{2\O_i
|e_{ij}|}(p_j-p_i)=0,\quad\text{for }j\in N(i)
\end{equation}
where $p$ is a discrete pressure field to enforce $A \in \S$.
\end{theorem}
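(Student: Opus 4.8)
The plan is to start from the identity that the Lagrange--d'Alembert computation above has already produced, and then extract a pressure by means of Lemma~\ref{lemma:pressure}. Indeed, integrating the variation $\delta\L_h(A)=\Tr(-A^\flat\O\dot B-[A^\flat\O,A]B)$ by parts in time (the boundary terms vanish since $B=-\delta q\,q^{-1}$ vanishes at $t=0$ and $t=1$) gave Eq.~\eqref{eq:DELraw}:
\[
\Tr\bigl((\dot A^\flat\O+[A,A^\flat\O])\,B\bigr)=0\qquad\text{for every }B\in\S .
\]
So everything reduces to turning this family of scalar conditions into the pointwise identity~\eqref{EL}, and then rewriting~\eqref{EL} in the rescaled variables of~\eqref{EL2}.

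First I would factor out the volume form. The identity $[A,A^\flat\O]\O^{-1}=A A^\flat-(A A^\flat)^{T}=\mathbf{L}_AA^\flat$ follows from the computation in Lemma~\ref{LieDer}, using only that $A$ is null-row and $\O$-antisymmetric --- it requires nothing of $A^\flat$ beyond being an antisymmetric matrix, which matters because $A^\flat$ need not be null-row. Consequently $\dot A^\flat\O+[A,A^\flat\O]=\bigl(\dot A^\flat+\mathbf{L}_AA^\flat\bigr)\O$, and the matrix $Y:=\dot A^\flat+\mathbf{L}_AA^\flat$ is \emph{antisymmetric}: $\dot A^\flat$ is antisymmetric because $A^\flat$ is a discrete one-form, and $\mathbf{L}_AA^\flat=AA^\flat-(AA^\flat)^{T}$ is antisymmetric by construction. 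Eq.~\eqref{eq:DELraw} thus becomes $\Tr(Y\,\O B)=0$ for all $B\in\S$.

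Next I would invoke Lemma~\ref{lemma:pressure}. For $B\in\S$ the matrix $\O B$ is antisymmetric (since $(\O B)^{T}=B^{T}\O=-\O B$ by $\O$-antisymmetry of $B$), null-row, and supported on adjacent cells, and $B\mapsto\O B$ is a linear bijection of $\S$ onto the space of all such matrices; hence $Y$ pairs trivially, through $\Tr(Y\,\cdot\,)$, with every null-row antisymmetric matrix in $\N1$. By the argument in the proof of Lemma~\ref{lemma:pressure} --- whose content is exactly that the orthogonal complement of those matrices inside the antisymmetric sparse matrices is the space of discrete gradients $M_{ij}=p_j-p_i$ --- there is a vector $p=(p_1,\dots,p_N)$ with $Y_{ij}=p_j-p_i$ for $j\in N(i)$ (the overall sign of $p$ being conventional), i.e. $\bigl(\dot A^\flat+\mathbf{L}_AA^\flat+\dd p\bigr)_{ij}=0$ for $j\in N(i)$, which is Eq.~\eqref{EL}. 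Finally, restricting to neighboring cells and substituting the \emph{local} formula $A^\flat_{ij}=2\O_i\frac{|e_{ij}|}{|S_{ij}|}A_{ij}$ from Eq.~\eqref{eq:flatAdjacent} (hence $\dot A^\flat_{ij}=2\O_i\frac{|e_{ij}|}{|S_{ij}|}\dot A_{ij}$) together with $(\mathbf{L}_AA^\flat)_{ij}=\frac{1}{\O_j}[A,A^\flat\O]_{ij}$, and multiplying~\eqref{EL} through by $\frac{|S_{ij}|}{2\O_i|e_{ij}|}$, produces Eq.~\eqref{EL2} verbatim.

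I do not expect a deep obstacle, because the analytically substantive work --- the Lin constraints, the appearance of the commutator, the integration by parts, and above all the convergence estimates that justify $\L_h$ as a discrete kinetic energy --- all precede the statement. The step that genuinely needs care is matching Eq.~\eqref{eq:DELraw} to Lemma~\ref{lemma:pressure}: one must keep the volume weights straight, recognizing that the raw Euler--Lagrange matrix is ``(antisymmetric matrix)$\,\cdot\,\O$'' rather than antisymmetric itself, and that multiplying the test matrix $B$ by $\O$ carries the constraint set $\S$ exactly onto the null-row antisymmetric sparse matrices the pressure lemma concerns. A secondary point worth checking is that, since $A^\flat$ fails to be null-row in general (on a Cartesian grid it even has support two cells away), the identity $[A,A^\flat\O]\O^{-1}=\mathbf{L}_AA^\flat$ is legitimately used here only because it follows from properties of $A$ alone.
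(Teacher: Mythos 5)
Your proposal is correct and follows essentially the same route as the paper: the paper's proof is precisely to apply Lemma~\ref{lemma:pressure} with $Z=\dot A^\flat+[A,A^\flat\O]\O^{-1}$ and test matrices $Y=\O B$, $B\in\S$, and to substitute the Lie-derivative identity of Lemma~\ref{LieDer} (Eq.~\eqref{eq:LieDerDef}), which is exactly your argument. Your additional care about the volume weights, the sign convention for $p$, the fact that the identity $[A,A^\flat\O]\O^{-1}=AA^\flat-(AA^\flat)^T$ needs only antisymmetry of $A^\flat$ (which is not null-row in general), and the explicit rescaling by $\tfrac{|S_{ij}|}{2\O_i|e_{ij}|}$ to obtain Eq.~\eqref{EL2} merely spells out steps the paper leaves implicit.
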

\begin{proof}
Apply Lemma~\ref{lemma:pressure} (for $Z =\dot A^\flat+[A, A^\flat\O]\O^{-1}$ and $Y=\Omega B$)
to Eq.~\eqref{eq:DELraw} and substitute the definition of discrete Lie derivative given
in Eq.~\eqref{eq:LieDerDef}.
\end{proof}

The resulting discrete Euler-Lagrange (DEL) equations we obtained represent a \emph{weak form} of
the continuous Euler equations expressed as:
\[ \dot v^\flat+\mathbf{L}_v v^\flat +\dd p=0. \]
Furthermore, these equations of motion can be reexpressed in various ways, mimicking
different forms of Euler equations: for instance, the discrete equations of motion
written as
\[
(\dot A^\flat+i_A \dd A^\flat +\dd \tilde p)_{ij}=0
\]
for all $ j\in N(i)$, which are equivalent to
\[
\dot v + v \times \omega +\nabla \tilde p =0
\]
(where $\tilde p$ is the dynamic pressure), while taking the exterior derivative
of these same equations leads to
\[
(\dot{(\dd A^\flat)} + \mathbf{L}_A (\dd A^\flat))_{ij} =0 \quad \forall j\in N(i),
\]
a discrete version of
\[
\dot \omega + \mathbf{L}_v \,\omega =0.
\]

\subsection{Discrete Kelvin's Theorem.}
\label{sec:Kelvin} %
This section presents a discrete version of Kelvin's theorem that the discrete Euler-Lagrange equations fulfill. Through replacing the continuous notions of a curve and its advection by discrete Eulerian counterparts, the proof of this discrete Kelvin's theorem will be essentially the same as in the continuous case, which we will describe first for completeness.

\medskip

\noindent
\textbf{Kelvin's Theorem: The Continuous Case.} %
Kelvin's theorem states that the circulation along a closed curve stays constant as the curve is \emph{advected} with the flow. Let $\gamma_t$ be a closed curve and $C_{\g_t}v_t$ be the circulation of $v_t$ along $\gamma_t$, i.e.:
\[
C_{\g_t}v_t=\oint_{\g_t}v_t\cdot ds.
\]
Consider a divergence-free vector field $\g^\e_0$ representing a ``narrow current" of width $\e$ flowing along $\g_0$, with unit flux when integrated over transversal sections of the curve. This \emph{current} can be thought of as an $\e-$spreading (akin to a convolution by a smoothed Dirac function) of the tangent vector field to the immediate surroundings of the curve $\gamma_t$, forming a smoothed notion of a curve. Let $\g^\e_t$ be the field $\g^\e_0$ advected by the flow $v_t$, i.e., it satisfies:
\begin{equation} \dot\g_t^\e+\mathbf{L}_{v_t}\g^\e_t=0. \label{eq:continuousAdvection}\end{equation}
Note that this equation encodes the notion of advection of a curve seen from a current point of view, hence without the need for a parameterization of the curve; see~\cite{Arnold1966}. Then, as $\e\to 0$,
\[
\la v^\flat_t,\g^\e_t\ra\to C_{\g_t}v_t,
\]
so the pairing $\la v^\flat_t,\g^\e_t\ra$ can be considered as an approximate circulation converging to the real circulation as $\e\to 0$. We can compute its derivative:
\[
\frac{d}{dt}\la v^\flat_t,\g^\e_t\ra=\la\dot v^\flat_t,\g^\e_t\ra+\la
v^\flat_t,\dot\g^\e_t\ra=-\la \mathbf{L}_{v_t}v^\flat_t,\g^\e_t\ra-\la v^\flat_t,\mathbf{L}_{v_t}\g^\e_t\ra=0.
\]
And since this pairing represents the circulation along the $\e-$smoothed curve for any $\epsilon$, the circulation itself stays constant.

\medskip

\noindent {\bf Remark.} A current is formally the dual of a $1$-form (in the sense of vector space duality), i.e., it is a linear map that takes a $1$-form to $\R$. When the space is equipped with a metric, one can think of a current as a vector field as described above. While a metric-independent treatment is possible as well, we will stick to the vector field point of view for simplicity in this paper.
\medskip

Achieving the goal of finding a discrete Kelvin's theorem first requires a definition of \emph{discrete curves} and their advection, for which we will borrow the concept of
\emph{one-chains} used in algebraic topology and demonstrate that curves and
vector fields satisfying the non-holonomic constraints share the same
representation; that is, \emph{the discretization of a curve $\gamma(s)$ will be thought
of as a discretization of the narrow current $\g^\e$}. Since we already have
established a discrete analog to the Lie derivative (based on the commutator of matrices), we
will be able to define how to \emph{advect a discrete curve along a discrete
vector field}. We will find that, just like Kelvin's circulation theorem in the
continuous case, for any discrete curve $\gamma_t$ advected by a discrete
vector field $A(t)$ satisfying the discrete Euler equations, the circulation of $A(t)$
along $\gamma_t$ remains constant.

\medskip

\noindent
\textbf{Discrete Curves.}
A discrete curve in our Eulerian setup can be nicely defined using the concept of one-chains. Let's recall that \emph{dual one-chains} are linear combinations of dual edges (linking two adjacent cells), converging to one-manifolds as the mesh gets finer (see~\cite{Munkres1984} for a thorough exposition of chains and simplicial homology, and~\cite{Bossavit1998} for applications in electromagnetism). In our context, in order to consider curves as ``currents'' (i.e., localized vector fields) as in the continuous description above, we will be using a linear combination of primal fluxes instead, exploiting the well-known isomorphism (from the Poincar{\'e} duality theorem) between dual one-chains and primal two-forms in 3D (i.e., between dual one-chains and primal $(n-1)$-cochains in dimension $n$, see~\cite{Munkres1984}). In other words, an $\O$-antisymmetric matrix will be used to describe a discrete curve as it was used to describe a two-form. We start by defining a \emph{simple} curve:

\begin{Def}
A {\bfi simple discrete curve} is a discrete path from cell $C_{i_1}$ to cell
$C_{i_2}, \ldots$, to cell $C_{i_n}$ with $C_{i_k}$ adjacent to $C_{i_{k+1}}$ and such that $(i_k, i_{k+1}) \ne (i_j, i_{j+1})$ for $k \ne j$.  It is represented by an $\O$-antisymmetric
matrix $\Gamma$ whose entries $\Gamma_{ij}$ satisfy
\[
\O_{i_k}\Gamma_{i_ki_{k+1}}=-\Gamma_{i_{k+1}i_k}\O_{i_{k+1}}=\frac12,
\]
and
\[
\Gamma_{ij}=0,\quad\text{for $(i,j)\ne(i_k,i_{k+1})$} \, \forall k.
\]
\end{Def}

The matrix $\Gamma$ representing a simple discrete curve $\gamma(s)$ that exactly
follows dual edges can be considered as a discrete current induced by the
tangent field $d\gamma(s)/ds$. Moreover, one can extend the notion of discrete
curves to encompass arbitrary dual one-chains. In our work, we will be focusing
on \emph{closed} discrete curves described as discrete \emph{divergence-free}
currents:
\begin{Def}
A {\bfi discrete closed curve} is a simple discrete curve that closes (i.e., a
discrete path from cell $C_{i_1}$ to cell $C_{i_2}, \ldots$, to cell $C_{i_n}$, and back to cell $C_{i_1}$).
It is represented by a null-row $\O$-antisymmetric matrix $\Gamma$ such that
$\Gamma_{ij}=0$ when $(i,j)\ne(i_k,i_{(k+1)\!\!\mod\!n})$ for some $k$.
\end{Def}
Since our discrete representation of a one-manifold coincides with our
definition of discrete Eulerian velocities in the NHC, we will no
longer distinguish between discrete curves and discrete velocities.

\medskip

\noindent
\textbf{Discrete Circulation.} Due to the duality between discrete curves and discrete fluxes of a vector field, the circulation of a vector field along a curve is trivially computed using the same pairing of matrices we used earlier:
\begin{Def}
The circulation $C_{\Gamma} A$ of a discrete vector field $A$ along a discrete curve $\Gamma$
is defined as
\[ C_\Gamma A\equiv\lla A^\flat, \Gamma\rra. \]
\end{Def}

We finally need to define a discrete notion of advection, which should be an approximation to $\mathbf{L}_v\gamma^\e$. We use a matrix $A\in\S$ to discretize $v$ and a matrix $\Gamma\in\S$ to discretize $\gamma^\e$, so their commutator $[A,\Gamma]$ is a discretization of $\mathbf{L}_v\gamma^\e$. However, $[A,\Gamma]\notin\S$. So instead, we can only consider the elements of $[A,\Gamma]$ that satisfy the constraints to define our \emph{weak} notion of curve advection:

\begin{Def}
Let $\Gamma_t\in\S$ be a family of discrete curves evolving in time and $A_t$ is a
(time-dependent) discrete vector field. We say that $\Gamma_t\in\S$ is advected by
$A_t$ if $\Gamma_t$ satisfies the advection equation
\begin{equation} \label{advection} \lla X^\flat,\dot \Gamma+[A,\Gamma]\rra=0,\quad\text{for any }X\in\S.\end{equation}
\end{Def}
Note that this definition defines a projection of the commutator $[A,\Gamma]$ onto the subspace $\S$ of non-holonomic constraints, and Fig.~\ref{fig:Projection} depicts this projection for the case of a regular grid.

\begin{figure}[h!]
\centering
\includegraphics{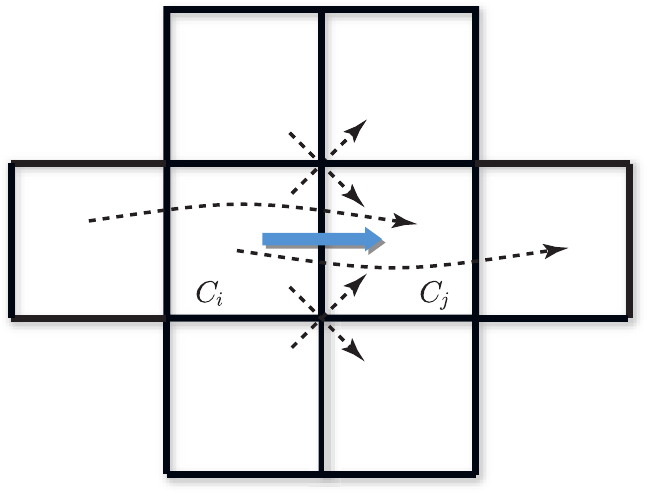}\vspace*{-4mm}
\caption{\footnotesize Projection on Regular Grids: our projection of $[A,B]$ onto
the subspace of non-holonomic constraints accumulates on the common
boundary of $C_i$ and $C_j$ all the two-cell-away transfers going
through this boundary. In this figure, the transfers in dotted lines are summed up
(with a weight of $\half$ for the diagonal ones) and assigned to the blue one-away
transfer.}\label{fig:Projection}
\end{figure}

Now, let's prove that if $\dot \Gamma$ satisfies Eq.~(\ref{advection}), it is a
discrete (weak) approximation of Eq.~\eqref{eq:continuousAdvection}. Indeed, if $X\rightsquigarrow w$, $A\rightsquigarrow v$, $\Gamma\rightsquigarrow\gamma$, then, by definition of the discrete operator
$\flat$,
\[
\lla X^\flat,\dot\Gamma\rra \to \int_M (w,\dot \g)
\]
and
\[
\lla X^\flat,[A,\Gamma]\rra \to \int_M (w,[v,\g]).
\]
Thus, if Eq.~(\ref{advection}) is satisfied, $\dot\g$ has to satisfy
\[\int_M(w,\dot\g+[v,\g])=0\]
for every $w\in\SVect(M)$. Since $\g\in\SVect(M)$, this last equation is a weak form of
$\dot\g=-[v,\g]=-\mathbf{L}_v\g$.

\medskip

\noindent
\textbf{Discrete Kelvin's Theorem.} We are now ready to give a discrete analog of
Kelvin's circulation theorem satisfied by our discrete Euler equations.
\begin{theorem}
If $A_t$ satisfies the DEL equations~\eqref{EL} and $\Gamma_0$ is an arbitrary
discrete curve, then the circulation of $A$ along $\Gamma_t$ stays constant:
\[ C_{\Gamma_t} A_t=C_{\Gamma_0} A_0, \]
where $\Gamma_t$ is the curve $\Gamma_0$ advected by $A_t$.
\end{theorem}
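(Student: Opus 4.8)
The plan is to follow the continuous argument displayed just above, line for line: differentiate the discrete circulation $C_{\Gamma_t}A_t=\lla A_t^\flat,\Gamma_t\rra$ in time, substitute the discrete Euler--Lagrange equation \eqref{EL} and the advection equation \eqref{advection}, and watch everything cancel. Since the flat operator and the pairing $\lla\,\cdot\,,\,\cdot\,\rra$ are linear, the first step is
\[
\frac{d}{dt}\,C_{\Gamma_t}A_t=\lla \dot A^\flat,\Gamma\rra+\lla A^\flat,\dot\Gamma\rra .
\]
For the first term the key observation is that, since $\Gamma\in\S$ is supported only on pairs of adjacent cells, the pairing $\lla\,\cdot\,,\Gamma\rra$ reads off entries of its left argument only on such pairs, and \eqref{EL} is exactly an identity on those pairs; hence $\lla\dot A^\flat,\Gamma\rra=-\lla\mathbf{L}_A A^\flat,\Gamma\rra-\lla\dd p,\Gamma\rra$. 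A short computation using that $\Gamma$ is null-row and $\O$-antisymmetric gives $\lla\dd p,\Gamma\rra=0$ --- the discrete analogue of $\oint_\gamma\dd p=0$ for a closed loop --- so this term reduces to $-\lla\mathbf{L}_A A^\flat,\Gamma\rra$. For the second term, because $A$ itself lies in $\S$ it is an admissible test field in the (weak) advection equation \eqref{advection}, so taking $X=A$ yields $\lla A^\flat,\dot\Gamma\rra=-\lla A^\flat,[A,\Gamma]\rra$. Thus $\frac{d}{dt}C_{\Gamma_t}A_t=-\lla\mathbf{L}_A A^\flat,\Gamma\rra-\lla A^\flat,[A,\Gamma]\rra$.

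It then remains to prove the discrete Leibniz (skew-symmetry) identity
\[
\lla\mathbf{L}_A A^\flat,\Gamma\rra=-\lla A^\flat,[A,\Gamma]\rra ,
\]
the discrete counterpart of the fact that $\mathbf{L}_v$ is skew with respect to the pairing of $1$-forms with divergence-free vector fields. I would expand $\mathbf{L}_A A^\flat=\ii_A\dd A^\flat+\dd\ii_A A^\flat$ by Cartan's formula: the exact part contributes $\lla\dd\ii_A A^\flat,\Gamma\rra=0$ by the same null-row computation used for the pressure term, while the contraction--pairing identity $\lla\ii_A\F{2},B\rra=\lla\F{2},A,B\rra$ (valid for $B\in\S$, hence for $B=\Gamma$) turns the remaining part into $\lla\dd A^\flat,A,\Gamma\rra$, which equals $-\lla A^\flat,[A,\Gamma]\rra$ by the algebraic identity $\lla\dd A^\flat,B,C\rra=-\lla A^\flat,[B,C]\rra$ established in the course of proving Lemma~\ref{flatteninglemma} (here with $B=A$, $C=\Gamma$, both in $\S$). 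Substituting this back gives $\frac{d}{dt}C_{\Gamma_t}A_t=\lla A^\flat,[A,\Gamma]\rra-\lla A^\flat,[A,\Gamma]\rra=0$, and since this holds for all $t$, integrating from $0$ to $t$ yields $C_{\Gamma_t}A_t=C_{\Gamma_0}A_0$.

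The step requiring the most care is this last identity, together with the bookkeeping forced by the two incompatible constraint sets: $[A,\Gamma]$ leaves $\S$, so $\lla A^\flat,[A,\Gamma]\rra$ genuinely uses the values of $A^\flat$ on pairs of cells two apart --- precisely the extra data the flat operator of Section~\ref{Flattening} was built to carry --- while the advection equation is only a weak statement, valid when tested against fields in $\S$. The argument closes cleanly only because the one test field we need, $X=A$, does lie in $\S$, and because the pressure and exact terms are killed by the null-row property of $\Gamma$; I would state these two facts explicitly before assembling the telescoping cancellation. As a purely computational alternative to the Cartan-formula step, one can instead write $\mathbf{L}_A A^\flat=[A,A^\flat\O]\O^{-1}$ as in \eqref{EL2} and reduce $\lla\mathbf{L}_A A^\flat,\Gamma\rra+\lla A^\flat,[A,\Gamma]\rra$ to zero using only cyclicity of the trace, the identity $\Tr(X[Y,Z])=\Tr([X,Y]Z)$, and the antisymmetry of $\O\Gamma$ and of $A^\flat$; but the exterior-calculus route makes the parallel with the continuous proof more transparent.
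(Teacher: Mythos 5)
Your proposal is correct and follows essentially the same route as the paper's proof: differentiate $\lla A_t^\flat,\Gamma_t\rra$, substitute the DEL equations (the pressure term vanishing since $\Gamma$ is null-row and $\O$-antisymmetric), test the weak advection equation with $X=A\in\S$, and cancel via the identity $\lla\mathbf{L}_A A^\flat,\Gamma\rra=-\lla A^\flat,[A,\Gamma]\rra$. The only difference is cosmetic --- you justify that identity through Cartan's formula, the contraction--pairing identity, and the relation $\lla\dd A^\flat,B,C\rra=-\lla A^\flat,[B,C]\rra$ from Lemma~\ref{flatteninglemma}, whereas the paper uses trace cyclicity on $[A^\flat\O,A]\O^{-1}$ directly, an alternative you yourself note at the end.
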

\begin{proof}
The time derivative of the circulation $C_{\Gamma_t} A_t$ is expressed as:
\[
\frac{d}{dt}C_{\Gamma_t}A_t=\frac{d}{dt}\lla A_t^\flat,\Gamma_t\rra=\lla\dot A_t^\flat,\Gamma_t\rra+\lla A_t^\flat,\dot \Gamma_t\rra.
\]
Since $A^\flat_t$ satisfies the DEL equations $\lmat \dot
A_t^\flat+[A_t^\flat\O, A_t]\O^{-1}+\dd\bar p_t\rmat_{ij}=0$ for $i$ and $j$
representing two neighboring cells' indices, and as $\Tr((\dd\bar p_t)\Gamma_t)=0$, we
have
\begin{align*}
\lla \dot A_t^\flat,\Gamma_t\rra=-\lla[A_t^\flat\O,A_t]\O^{-1},\Gamma_t\rra &=-\Tr([A_t^\flat\O,A_t]\Gamma_t)\\
& =-\Tr(A_t^\flat\O[A_t,\Gamma_t])=-\lla A_t^\flat,[A_t,\Gamma_t]\rra.
\end{align*}
But since $\Gamma_t$ is advected by $A$, we get
\[ \frac{d}{dt}C_{\Gamma_t}A_t=\lla\dot A_t^\flat,\Gamma_t\rra+\lla A_t^\flat,\dot \Gamma_t\rra=\lla A_t^\flat,\dot \Gamma_t+[A_t,\Gamma_t]\rra=0.\vspace*{-6mm}\]
\end{proof}

\noindent
{\bf Remark.} In the continuous case, the Kelvin's circulation theorem can be
derived from Noether's theorem using right-invariance of the metric on $\SDiff$
(particle relabelling symmetry). In the discrete case, the Lagrangian is also
right invariant, but the presence of the non-holonomic constraints prevents us
from using Noether's theorem directly: in a system with non-holonomic constraints
a momentum is no longer expected to be conserved in general. However, we can still
use the symmetry to obtain the momentum equation, i.e. the rate of change of the
momentum in time. Doing so for our discrete fluid model we also get our discrete
circulation theorem.

\section{Fluid evolution in discrete time}
\label{sec:discretetime}

In this section, we revisit our discrete version of the variational principle discussed
above by making time discrete instead of continuous. We assume that the fully
discrete fluid motion is given as a discrete path $q_0, q_1,\ldots,q_K$ in the space of
$\O$-orthogonal signed stochastic matrices, where the motion has been
sampled at regular time $t_k=k \tau$ for $k \in \{0,1,\ldots, K\}$, $\tau$ being referred
to as the time step size.

\smallskip

\subsection{Discrete Velocity.}
Given a pair $q_k, q_{k+1}$ of consecutive configurations in time, we can compute a
discrete time analog of Eulerian velocity using, e.g., one of the following
classical formulas:
\begin{align*}
\bullet\quad q_{k+1} & = q_k -\tau A_k \; q_k,                &\text{(explicit Euler)}\\
\bullet\quad q_{k+1}& = q_k  -\tau A_k \; q_{k+1},            &\text{(implicit Euler)}\\
\bullet\quad q_{k+1}& = q_k  -\tau A_k\; (q_k+q_{k+1})/2.     &\text{(midpoint rule)}
\end{align*}
Note that the midpoint rule preserves the Lie group structure of the configuration
space. Note also that many other discretizations could be used, but we will
restrict our explanations to the first two cases
as they suffice to illustrate how
our continuous time procedure can be adapted to the purely discrete case.

\begin{figure}[h!]
\centering
\includegraphics[width=3in]{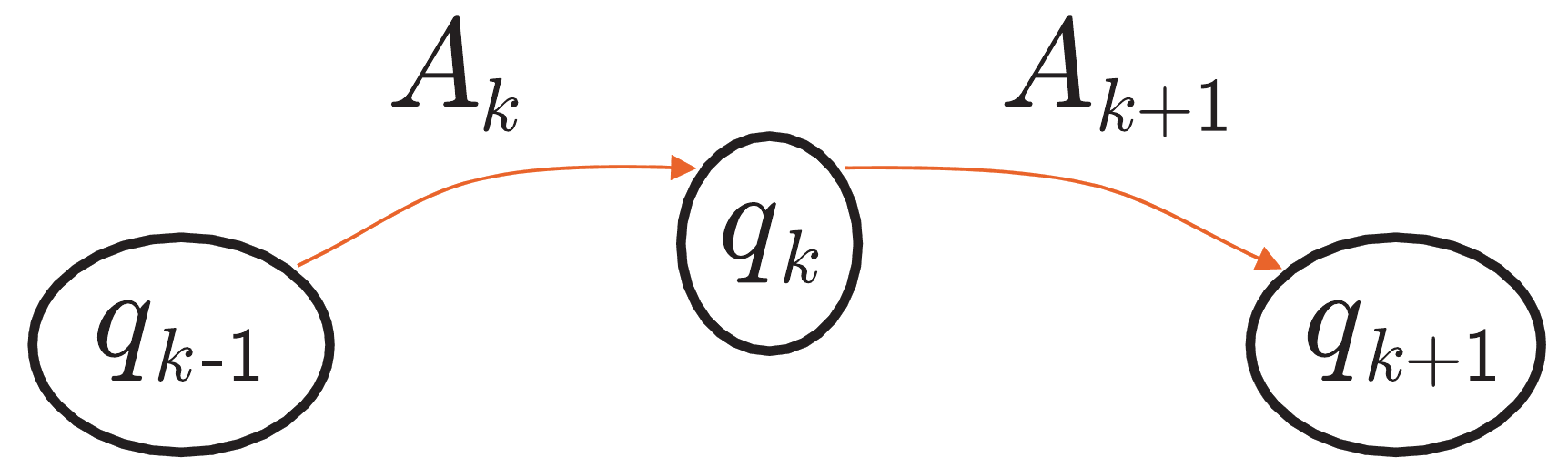}
\caption{\footnotesize Three consecutive configurations $q_{k-1}, q_k, q_{k+1}$ of a fluid in time, with Eulerian velocities $A_k$ and $A_{k+1}$ in between.} \label{states}
\end{figure}

\subsection{Discrete Lagrangian and Action.}
We define the discrete-space/discrete-time Lagrangian $\L_d(q_k,q_{k+1})$ as
\[ \L_d(q_k,q_{k+1})=\L_h(A_k). \]
The discrete action $\A_d$ along a discrete path is then simply the sum of all
pairwise discrete Lagrangians:
\[ \A_d(q_0,\ldots,q_K)=\sum_{k=0}^{K-1} \L_d(q_k,q_{k+1}). \]
We can now use the Lagrange-d'Alembert principle that states that $\delta
\A_d=0$ for all variations of the $q_k$ (for $k=1,\dots,K-1$, with $q_0$ and
$q_K$ being fixed) in $S_q$ while $A_k$ is restricted to $\S$.

\medskip

\noindent
\textbf{Variations.}
The variations of $A_k$ can be easily derived:
\begin{itemize}
\item {\it Explicit Euler.}
In this case, $A_k=-(q_{k+1}-q_k)/\tau\;q_k^{-1}$. The variation $\delta_k A_k$ and $\delta_{k+1}A_k$ with respect to $q_k$ and $q_{k+1}$ respectively become:
\[ \delta_k A_k=\frac{1}{\tau} \delta q_k q_k^{-1}+\frac{q_{k+1}-q_k}{\tau} q_k^{-1}\delta q_k q_k^{-1},\]
\[ \delta_{k+1} A_k=-\frac{1}{\tau} \delta q_{k+1}q_k^{-1}. \]
If we denote, similar to the continuous case, $B_k=-\delta q_k q_k^{-1}$, we
get:
\[ \delta_k A_k=-\frac{B_k}{\tau}+A_kB_k \]
and
\[ \delta_{k+1} A_k=\frac{B_{k+1}}{\tau}-B_{k+1}A_k.\]

\item {\it Implicit Euler.}
In this case $A_k=-(q_{k+1}-q_k)/\tau\;q_{k+1}^{-1}$. It yields:
\[ \delta_kA_k=\frac{1}{\tau} \delta q_kq_{k+1}^{-1} \]
and
\[ \delta_{k+1}A_k= -\frac{1}{\tau} \delta q_{k+1}q_{k+1}^{-1}+\frac{q_{k+1}-q_k}{\tau}q_{k+1}^{-1}\delta q_{k+1}q_{k+1}^{-1}. \]
Similarly to the previous case we now obtain:
\[ \delta_kA_k=-\frac{B_k}{\tau}-B_kA_k,\]
and
\[ \delta_{k+1}A_k=\frac{B_{k+1}}{\tau}+A_kB_{k+1}. \]

\end{itemize}

\subsection{Discrete Euler-Lagrange Equations.}
\label{sec:DEL}
Equating the variations of the action
$\A_d$ with respect to $\delta q_k$ to zero for $k\in[1,K-1]$ yields:
\begin{equation}\label{DEL}
\delta_k\lla A^\flat_{k-1},A_{k-1}\rra+\delta_k\lla
A^\flat_k,A_k\rra=0.
\end{equation}
Thus we obtain:
\[\Tr\big[A_{k-1}^\flat\O(\delta_kA_{k-1})+A_k^\flat\O(\delta_kA_k)\big]=0.\]

Now, let's solve it for $A_k$ in the explicit Euler case. Substituting the expressions for
$\delta_kA_k$ and $\delta_k A_{k-1}$ yields:
\[ \Tr\bigl[A_{k-1}^\flat\O(B_k-\tau B_kA_{k-1})+A_k^\flat\O(-B_k+\tau A_kB_k)\bigr]=0.\]
Denoting $\dot A_k^\flat=(A_k^\flat-A_{k-1}^\flat)/\tau$ we can rewrite the last
equation as
\[\Tr[(\dot A_k^\flat\O+A_{k-1}A_{k-1}^\flat\O-A_k^\flat\O A_k)B_k]=0.\]

Therefore, we get the following discrete Euler-Lagrange equations in the
explicit Euler case:
\[\dot A_k^\flat-\left(\frac{A_k^\flat\O
A_k\O^{-1}-A_{k-1}A_{k-1}^\flat}{2}-\frac{(A_k^\flat\O
A_k\O^{-1}-A_{k-1}A_{k-1}^\flat)^T}{2}\right)+\dd p_k=0.\]
As $(A_k^\flat\O A_k\O^{-1})^T = A_k A_k^\flat$ and $(A_{k-1}A_{k-1}^\flat)^T=A_{k-1}^\flat\O A_{k-1} \O^{-1}$, this last expression is equivalent to
\begin{equation} \label{TimeDiscreteEL}
\dot A_k^\flat+\frac12([A_{k-1},A_{k-1}^\flat\O]\O^{-1}+[A_k,A_k^\flat\O]\O^{-1})+\dd p_k=0,
\end{equation}
corresponding to the discrete-time version of Eq.~\eqref{eq:DELraw}.

Using the implicit Euler formula for $A_k$ instead of the explicit Euler one leads to the exact same equation,
we thus omit the computations here.

\subsection{Update Rule for Regular Grids in 2D}
The discrete Euler equation we derived above turns out to be particularly simple when applied to a regular grid. Indeed,
let's consider a regular grid of size $h$, on a two-dimensional
domain and with continuous time for simplicity. Then the discrete Euler equation~\eqref{EL2} becomes
\[2 h^2 \dot A_{ij}+[A,A^\flat]_{ij}+(p_j-p_i)=0,\quad\text{for }j\in N(i).\]
Now let's fix $i$ and $j$ and expand $[A,A^\flat]_{ij}$. Since $A\in\S$ we have
\[ [A,A^\flat]_{ij}=\sum_{l\in N(i)}A_{il}A^\flat_{lj}-\sum_{k\in N(j)}A^\flat_{ik}A_{kj}.\]
From the definition of $A^\flat$ (see Lemma~\ref{lem:FlatRegular}) we get:
\[A^\flat_{ik}=-\frac12\o_{ik}s_{ijk}+2h^2(A_{ij}+A_{jk}),\quad\text{for }k\in N(j) \text{ and } k \notin N(i)\]
and
\[ A^\flat_{lj}=-\frac12\o_{lj}s_{jil}+2h^2(A_{ij}+A_{li}),\quad\text{for }l\in N(i) \text{ and } l \notin N(j),\]
where $\o_{i_1i_2}$ is the vorticity in the DEC sense computed at the common node of cells $i_1$ and $i_2$ if $i_1$ and $i_2$ have a common node (see Fig.~\ref{AFlatRegularpic}), and $0$ otherwise; also $s_{i_1i_2i_3}=1$ if the triplet of cells $i_1,\,i_2,\,i_3$ is oriented counter-clockwise and $s_{i_1i_2i_3}=-1$ otherwise.

Now the equations for the commutator $[A,A^\flat]$ become
\[
[A,A^\flat]_{ij}=\frac12\sum_{k\in N(j)}A_{kj}\o_{ik}s_{ikj}-\frac12\sum_{l\in
N(i)}A_{il}\o_{lj}s_{jil}+2h^2\sum_{k\in N(j)}A_{jk}^2-2h^2\sum_{l\in
N(i)}A_{il}^2.
\]
If $k\in N(l)$ then $\o_{ik}=\o_{lj}$, so only two $\o$'s are
present in the expression above. Let's denote them by $\o_-$ and
$\o_+$ as depicted in Fig.~\ref{UpdateRegular}, and write
\[
[A,A^\flat]_{ij}=-\o_-\frac{A_{jk_1}+A_{il_1}}{2}-\o_+\frac{A_{k_2j}+A_{l_2i}}{2}+Q_j-Q_i,
\]
where $Q_i=2h^2\sum_{l\in N(i)}A_{il}^2$.

\begin{figure}[h!]
\centering
\includegraphics[width=1.5in]{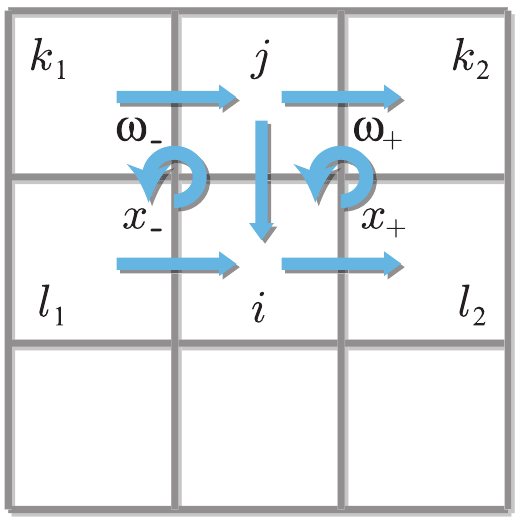}\vspace*{-2mm}
\caption{\footnotesize Notations used to rewrite the discrete Euler equation
on a regular mesh as a function of local velocities and vorticities.}
\label{UpdateRegular}
\end{figure}

As we know, $\o_-/h^2$ and $\o_+/h^2$ approximate the values $\o(x_-)$,
$\o(x_+)$ of vorticity at the corresponding nodes. Also, $A_{ij}\approx
-v_{ij}/2h$. Now suppose the pair of cells $C_i$ and $C_j$ is oriented along the
$y$ direction (see Fig.~\ref{UpdateRegular} again) and $v=(v_1,v_2)$. Let's
denote $A_{ij}\approx -v_2/2h$ and
\[
\begin{array}{rcl}
2hA_{ik_1} & = & -v_1^{--}\\
2hA_{jk_1} & = & -v_1^{-+}
\end{array}
\begin{array}{rcl}
2hA_{k_2i} & = & -v_1^{+-}\\
2hA_{k_2j} & = & -v_1^{++}
\end{array}.
\]
Now, the discrete discrete Euler equation implies
\[
\dot
v_2+\frac14(\o(x_-)(v_1^{--}+v_1^{-+})+\o(x_+)(v_1^{+-}+v_1^{++}))+P_j-P_i=0,
\]
where $P$ is some discrete function, playing the role of pressure. This
equation, together with the equations for every pair $i$ and $j$, is
a discrete version of the two-dimensional Euler equations written in
the form
\[
\begin{array}{rcl}
\dot v_1-\o v_2+P_x & = & 0,\\
\dot v_2+\o v_1+P_y & = & 0,
\end{array},\quad\div v=0,\quad \o=\frac{\partial v_2}{\partial
x}-\frac{\partial v_1}{\partial y}.
\]

The discretization of the Euler equations that we have obtained on the regular grid coincides with the Harlow-Welsh scheme (see~\cite{HaWe1965}), and Eq.~\eqref{TimeDiscreteEL} is a Crank-Nicolson (trapezoidal) time update. Therefore, our variational scheme can be seen as an extension of this approach to arbitrary grids, offering the added bonus of providing a geometric picture to these numerical update rules.

\begin{figure}[ht]
\centering
\includegraphics[width=\textwidth]{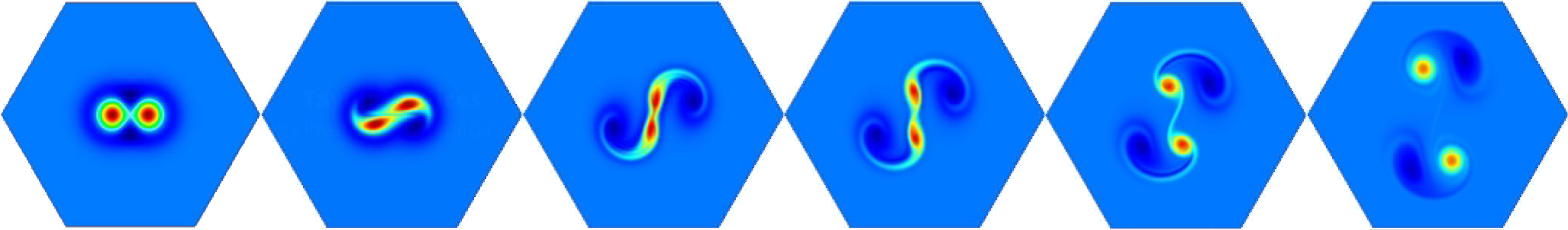}\vspace*{-2mm}
\caption{\footnotesize \textbf{Taylor Vortices Separating:} two like-signed Taylor vortices (with a finite
vorticity core) will merge when their distance of separation is smaller than some critical value. In this example,
the two vortices in a domain discretized with 55296 triangles were initialized at a distance slightly above this
critical value, leading to a separation.}\vspace*{-1mm}
\label{fig:results}
\end{figure}

\section{Conclusions and Discussions}

The discrete geometric derivation of Euler equations we presented above differs sharply from previous geometric approaches. First and foremost, our work derives the fluid mechanics equations from the least action principle, while many previous techniques are based on finite volume, finite difference, or finite element methods applied to Euler equations(see~\cite{HaWe1965,Perot2000,GrSa2000} and references therein). Second, our derivation does not presume or design a Lie derivative or a Poisson bracket in the manner of geometric approaches such as~\cite{Salmon2004}. Finally, we discretize the volume-preserving diffeomorphism group, offering a purely Eulerian alternative to the inverse map approach proposed in~\cite{CoHoHy2007,CoHo2009}. The resulting scheme does, however, have most of the numerical properties sought after, including energy conservation over long simulations~\cite{Perot2000}, time reversibility~\cite{DuOrWi2008}, and circulation preservation~\cite{ElToKaScDe2005}. We now go over some of the computational details and present a few results, before discussing possible extensions.

\subsection{Computational Details.}
Implementing the discrete update rules derived in Section~\ref{sec:DEL} is straightforward: from the sparse matrix $A_k \in \S$ describing the velocity field at time $t_k$, a new sparse matrix $A_{k+1} \in \S$ is computed by solving the discrete Euler-Lagrange equations through repeated Newton steps until convergence. Notice that the configuration state $q_k$ is not needed in the computations, making the numerical scheme capable of directly computing $A_{k+1}$ from $A_k$. Moreover the update rules can be rewritten entirely as a function of fluxes $F_{ij}=2 \Omega_i A_{ij}$, rendering the assembly of the advection operator simple. Finally, the Newton steps can also be made more efficient by approximating the Jacobian matrix involved in the solve by only its diagonal terms. Further details on the computational procedure can be found in~\cite{MuCrPaToDe2009}, including linearizations of the discrete Euler-Lagrange equations that tie our method with~\cite{SiAr1994}. It is also worth mentioning that our simulator applies to arbitrary topology, and that viscosity is easily added by incorporating a term proportional to the Laplacian of the velocity field~\cite{MuCrPaToDe2009}.

\begin{figure}[h!]
\centering
\includegraphics[width=.8\textwidth]{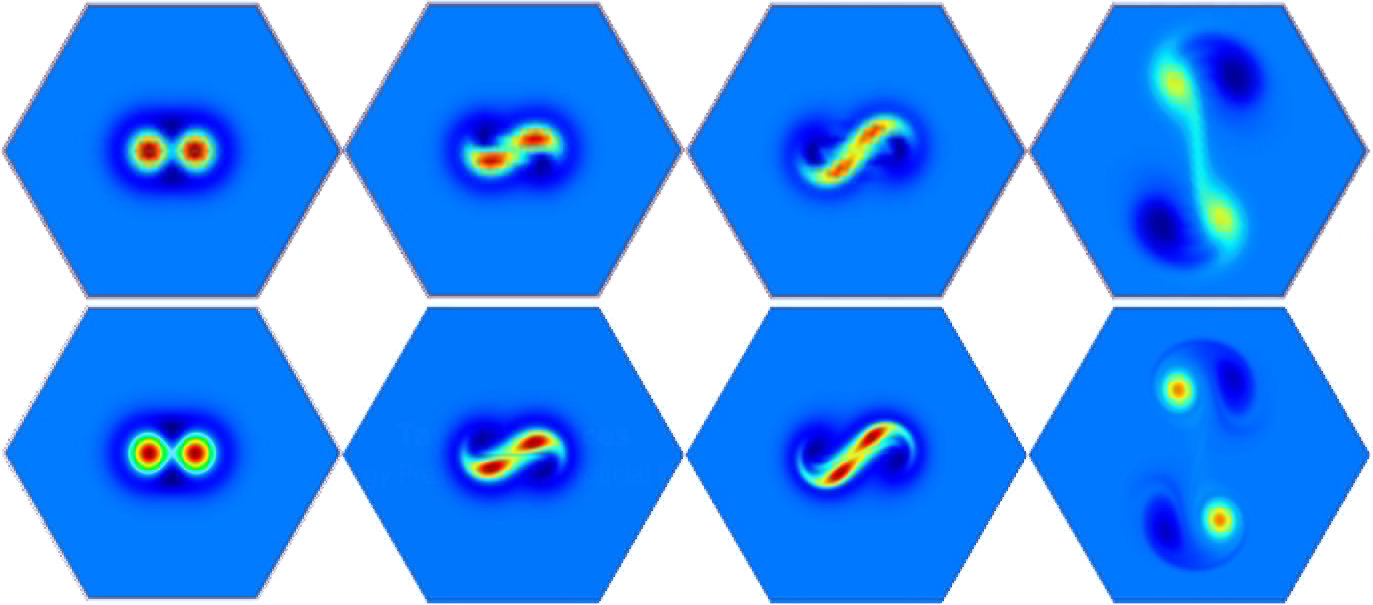}\vspace*{-1mm}
\caption{\footnotesize 2D fluid simulation of Taylor vortices separating: even at two very
different resolutions, our variational scheme leads to similar results (top:
4056 triangles; bottom: 55296 triangles; same continuous initial conditions discretized
on both grids).\vspace*{-2mm}}
\label{fig:CoarseVsFine}
\end{figure}

\subsection{Numerical Tests and Results.}
As expected from the time reversibility of the resulting discrete Euler-Lagrange equations, our fully Eulerian scheme demonstrates excellent energy behavior over long simulations, even for very low thresholds on the Newton solver. This numerical property was well known for the Harlow-Welsh discretization over regular grids when using a trapezoidal time integration scheme; our approach extends this scheme and its properties to arbitrary mesh discretization. Figure~\ref{fig:results} shows the results of this geometric integrator on a
\begin{wrapfigure}{r}{0.4\textwidth}
\hspace*{-2mm}\vspace*{1mm}\centering
\includegraphics[width=0.4\textwidth] {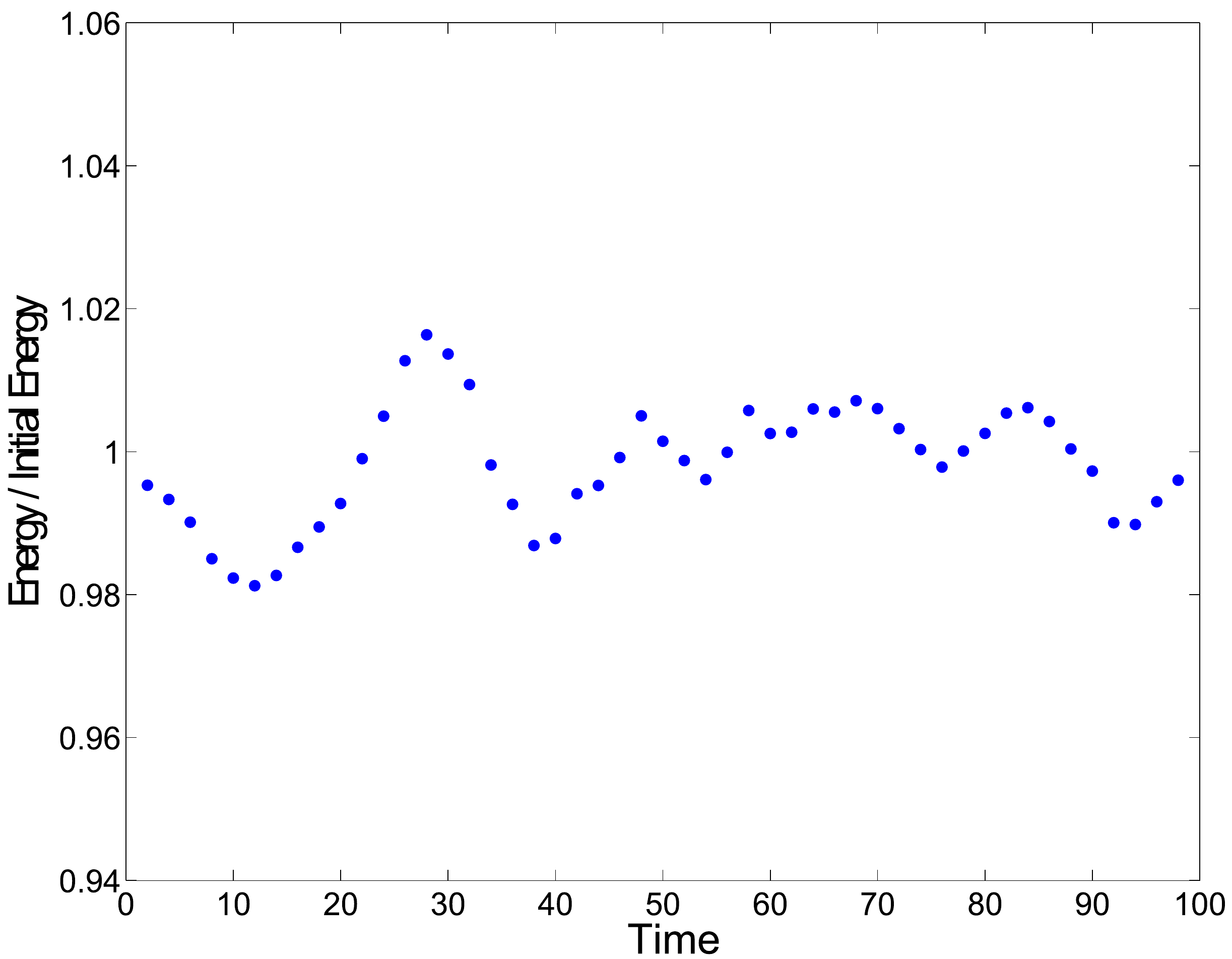} \vspace*{-6mm}\end{wrapfigure}
common test used in CFD, where a periodic 2D domain is initialized with two Taylor vortex distributions of same sign placed at a distance close to a critical bifurcation in the dynamics: as expected, the two vortices eventually separates, and our integrator keeps the energy close to the initial energy over extended simulation time (see inset). Figure~\ref{fig:CoarseVsFine} demonstrates the robustness of the integrator to grid size: the same dynamics of the vortices is still captured even on a number of triangles thirteen times smaller. Finally, Figure~\ref{fluid_teaser} shows frames of a simulation of a three-dimensional fluid on a tetrahedral mesh.

\subsection{Extensions.}
The results of this paper are rich in possible extensions. For instance, generalizing our approach to higher-order integrators is an obvious research direction. A midpoint approximation of the Eulerian velocity between $q_k$ and $q_{k+1}$ preserves the Lie group structure of the configuration space, but leads to additional cubic terms in the variation $\delta A$, thus requiring a flat operator valid for three-away cells as well. Finding a systematic approach to deriving such higher-order updates is the subject of future work.

We could also investigate alternative expressions for the discrete Lagrangian. One possibility is to notice that in the continuous case, the Lagrangian can be written as
\[
\L(q,\dot q) = \frac12 \sum_{i=1}^{n}(\mathbf{L}_v x_i,\mathbf{L}_v x_i)
\]
where $x_i$ represents the $i^\text{th}$ coordinates in $\R^n$. Its discrete equivalent in 2D could therefore be written as $\Tr(A^2 (X+Y))$ where the matrix $X\in\S$ (resp., $Y\in\S$0 is expressed as $X_{ij}=x_{C_i} \cdot x_{C_j}$ (resp., $Y_{ij}=y_{C_i} \cdot y_{C_j}$), with $x_{C_k}$ (resp., $y_{C_k}$) represents the $x$-coordinate (resp., $y$-coordinate) of the circumcenter of cell $C_k$. Taking variation would lead to $\Tr(A\, \delta A\, (X+Y)) = \Tr( ((X+Y)\dot{A} - [(X+Y)A, A])B)=0 \, \forall B \in \S$. We see that this alternate definition of the Lagrangian defines another flat operator (albeit, in a less geometric way).

Similarly, one may change the sparsity requirement of the NHC by defining the space $\S$ to be the sparsity induced by adjacency through \emph{vertices}. It would require a new Lie bracket which is not directly the Lie bracket for the matrices representing the vector fields, but the sparsity constraint would no longer be non-holonomic.

We wish to look at how the energy of our discrete simulator cascades at lower scales. More generally, understanding what this geometric picture of fluid flows brings compared to traditional Large Eddy Simulation or Reynolds-Averaged Navier-Stokes methods would be interesting, as our structure-preserving approach is also based on local averages (i.e., integrated values) of the velocity field.

We also wish to investigate the use of an ``upwind'' version of $A$, possessing only positive fluxes as often used in the discretization of hyperbolic partial differential equations~\cite{Leveque2002}. This would allow the reconstruction of non-negative matrices $q_k$, making them transition matrices of a Markov chain.

Finally, we note that the geometric understanding developed here should offer good foundations to tackle related problems, such as magnetohydrodynamics, variable density fluids, or Burgers' equations. Our initial results using an extension to systems with semi-direct product group structure show promise.


\end{document}